\theoremstyle{thmstyleone}%
\newtheorem{theorem}{Theorem}
\newtheorem{proposition}[theorem]{Proposition}%
\theoremstyle{thmstyletwo}%
\newtheorem{example}{Example}%
\theoremstyle{thmstylethree}%
\newcommand{\oeis}[1]{\color{blue}\textnormal{\href{https://oeis.org/#1}{#1}}\color{black}}
\newcommand{\na}{\cellcolor{green!5} \color{black}\substack{..\\ \smile}\color{black}}
\newcommand{\conj}{\color{red}\bullet\color{black}}
\newcommand{\noa}{\color{blue}\circ\color{black}}
\begin{document}

\title[Trees with flowers.]{Trees with flowers: A catalog of integer partition and integer composition trees with their asymptotic analysis}


\author[1]{\fnm{Ricardo} \sur{G\'omez A\'iza}} \email{rgomez@math.unam.mx}

\affil[1]{\orgdiv{Instituto de Matem\'aticas}, \orgname{Universidad Nacional Aut\'onoma de M\'exico}, \orgaddress{\street{\'Area de la Investigaci\'on Cient\'ifica, Circuito Exterior, Ciudad Universitaria}, \city{M\'exico}, \postcode{04510}, \state{CDMX}, \country{M\'exico}}\\ \\ {\emph{\footnotesize On the occasion of the 80th anniversay of the Instituto de Matemáticas, UNAM}}}



\abstract{We present families of combinatorial classes described
as trees with nodes that can carry one of two types of ``flowers'':
integer partitions or integer compositions.
Two parameters on the flowers of trees will be considered: the number
of ``petals'' in all the flowers (petals' weight) and the number of edges
in the petals of all the flowers (flowers' weight). 
We give explicit expressions of their generating functions
and deduce general formulas for the asymptotic growth
of their coefficients and the expectations
of their concentrated distributions.
}

\keywords{trees; flowers; loop systems; integer partitions; integer compositions}


\pacs[MSC Classification]{05A15, 05A16, 05C05, 05C80}

\maketitle

\section{Introduction}\label{sec:introduction}

A {\em tree with flowers} is a rooted tree together with some edge-disjoint cycles called {\em petals} attached to some of 
the nodes of the trees. These petals are vertex-disjoint as well, except for the vertex of the tree to which they are
attached. The collection of petals attached to a given vertex is called a {\em flower}. 
The size of a tree with  flowers is its {\em total} number of edges, that is, the number of edges of the tree
(the \emph{tree's weight}) plus the 
number of edges in all the petals of all the flowers in the tree
(the \emph{flowers' weight}).
The \emph{petals' weight} of a tree with flowers is the
total number of petals in all the flowers of the tree. 
See Figure \ref{fig:tree}.

\begin{figure}[h] 
   \centering
   $\underset{\textnormal{(a)}}{\includegraphics[width=1.58in]{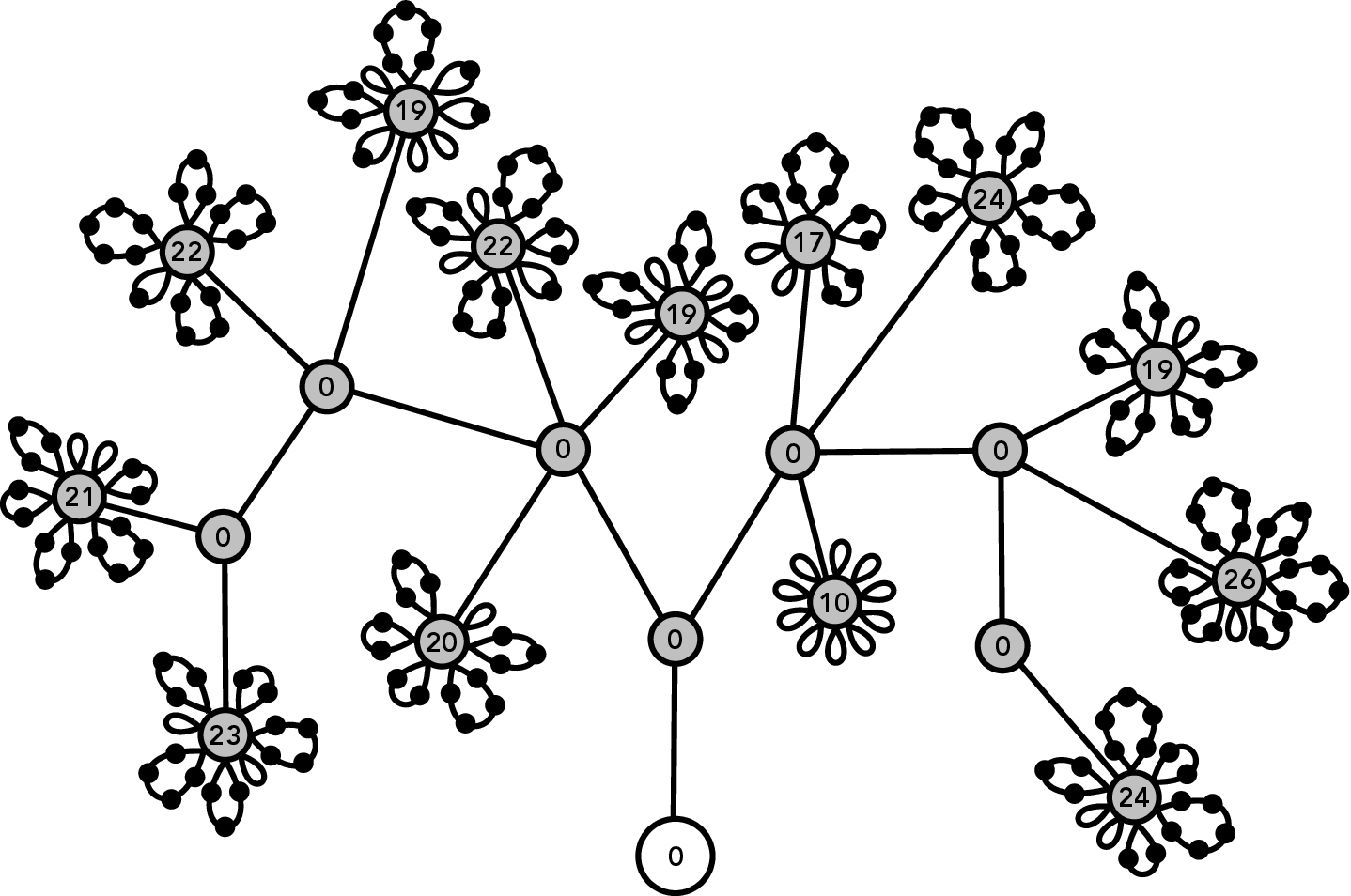}}$ \quad
   $\underset{\textnormal{(b)}}{\includegraphics[width=1.58in]{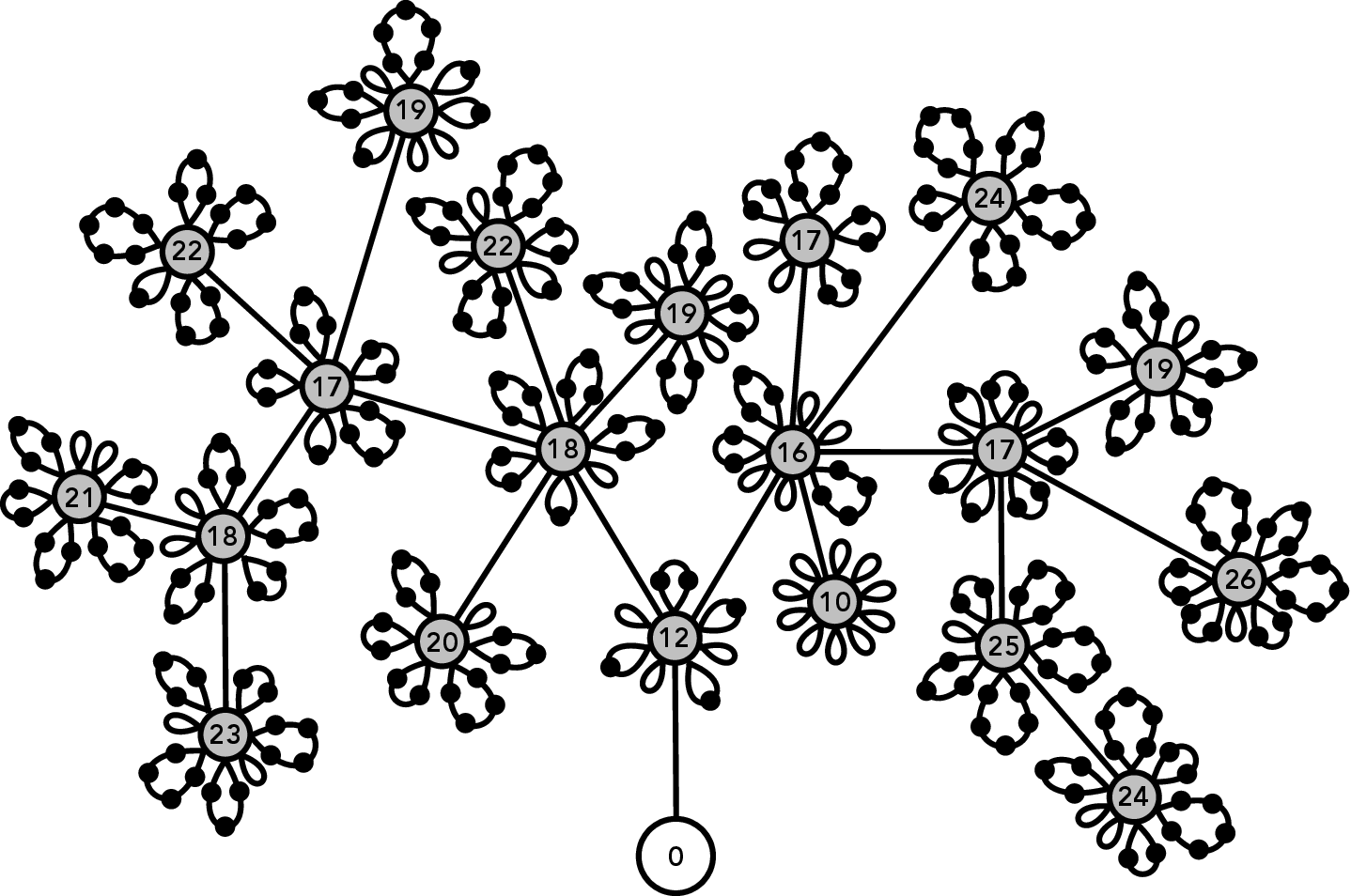}}$ \quad 
   $\underset{\textnormal{(c)}}{\includegraphics[width=1.58in]{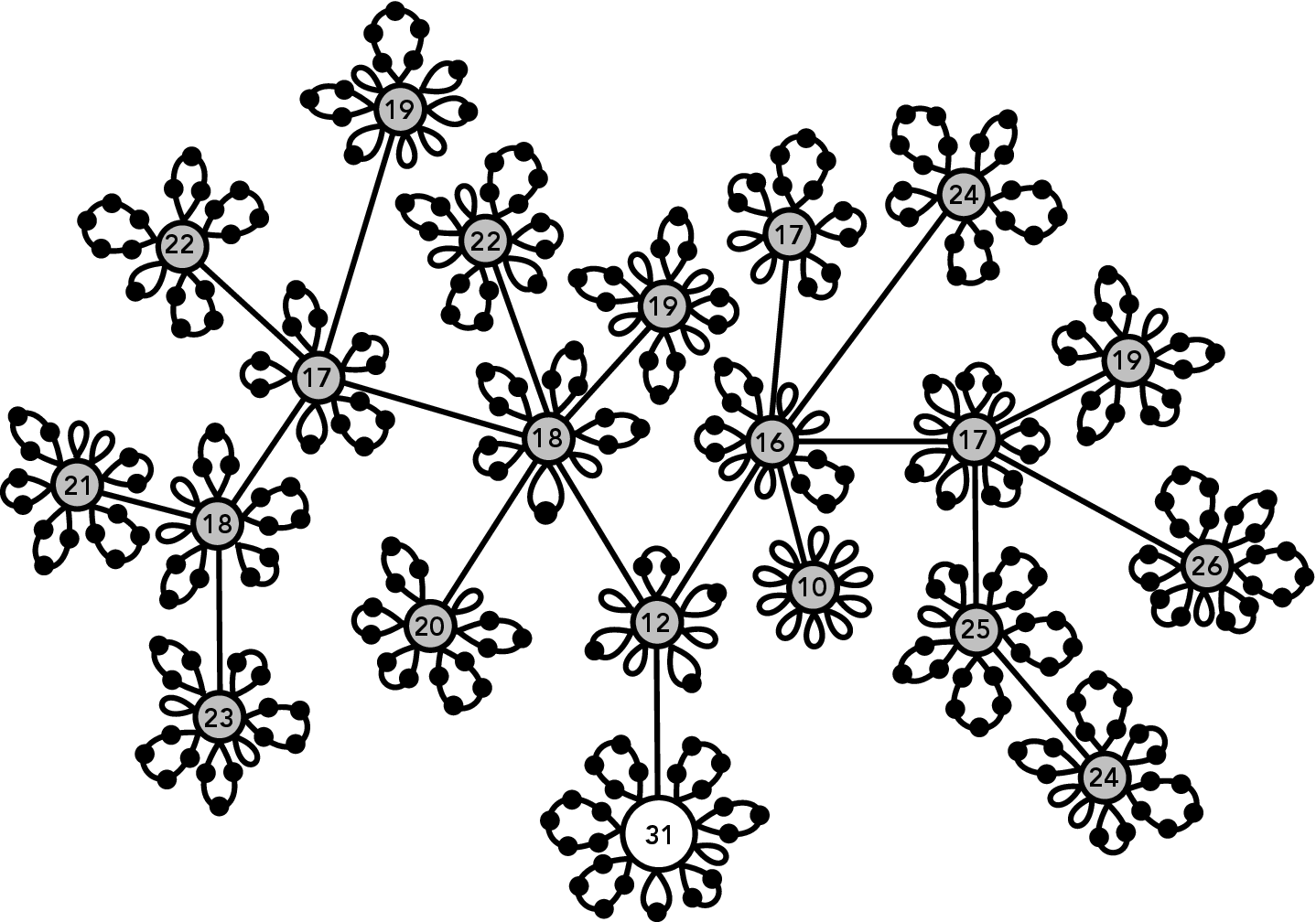}}$ 
   \caption{Three examples of trees with flowers, one for each class $\mathcal R$, $\mathcal S$ and $\mathcal T$ seen in this work, respectively. For all of them the tree's weight is $20$.
   (a) A tree with flowers on the leaves (there are 13 leaves), its size is $409$, the flowers' weight is $389$ and the petals weight is 89. (b) A tree with flowers everywhere but on the root. (c) A tree with flowers everywhere.}
   \label{fig:tree}
\end{figure}

Trees and flowers admit several versions, e.g. plane, non-plane, rooted, etc.
In this note, henceforth we only consider rooted-plane trees
and the flowers will be of two kinds: rooted-plane and non-plane. In fact, rooted-plane and non-plane flowers are combinatorially isomorphic to integer compositions and integer partitions, respectively.
Thus we are simply attaching integer compositions and integer
partitions to the nodes of rooted-plane trees.
We consider restrictions on the number of descendants of the trees,
like binary trees and 2-trees. We also look at 1-trees and arbitrary trees, i.e. paths and no restrictions at all, respectively.
We also consider restrictions on the size of the petals of the flowers, like
binary flowers, $k$-flowers, etc.

Several classes of plane trees are well known to be
isomorphic to many other combinatorial structures,
and the same occurs when we attach flowers to them:
In the On-Line Encyclopedia of Integer Sequences
(OEIS, see \cite{oeis2006}),
we searched for coefficients of generating functions 
and found 
some that are associated to either classes of trees with flowers
or parameters in classes of trees with flowers
(as cumulative generating functions)\footnote{The
tables described in Appendix \ref{sec:appendix} summarize
our findings in the records of OEIS.}.
As a consequence, now we can translate the structures of trees with flowers through
combinatorial isomorphisms and obtain wider combinatorial interpretations
of other combinatorial classes.

The analysis to determine the asymptotic 
growth of the coefficients of the generating functions
associated to plane trees with flowers depends on the particular class 
being considered and hence specific techniques are applied.
In all our cases three different situations can arise:
\begin{enumerate}
	\item
	When the trees are arbitrary trees, 2-trees, or binary trees, the
	generating functions are amenable of singularity analysis, with singularities
	of square root type (see \ref{prop:gftrees:1}, \ref{prop:gftrees:3} and \ref{prop:gftrees:4} in Proposition \ref{prop:gftrees}). Theorem~\ref{theo:analysis_single} summarizes these
	cases and we apply it to produce new examples in sections
	\ref{sec:appendix:1}, \ref{sec:appendix:2} and
	\ref{sec:appendix:3} of Appendix~\ref{sec:appendix}.
	\item
	When the trees are paths (i.e. 1-trees),
	the generating functions are \emph{rational}
	as long as the generating functions of the class
	of allowed flowers on the nodes of
	the $1$-trees are rational (see \ref{prop:gftrees:2} in Proposition \ref{prop:gftrees}). For example,
	this is the case when the set of allowed sizes for the petals is finite,
	or simply when the flowers are rooted-plane with no restrictions.
	Theorems \ref{thm:meromorphicTrees} and
	\ref{thm:meromorphicNonPlane} summarize these cases
	and again in section \ref{sec:appendix:4} of Appendix~\ref{sec:appendix}
	we will present several new examples.
	\item
	When the class consists of $1$-trees (paths) with non-plane flowers
	on the leaves (then there is only one leave actually, and thus only one
	flower, possibly empty) and the set of allowed sizes for the petals is
	infinite, we get certain infinite products that can be analyzed with
	techniques that involve Mellin transformations, residue analysis, and saddle point method,
	e.g. when no restrictions on the non-plane flowers are imposed.
	Here we will not address this situation because all the cases we consider
	are already well known
	(see the Appendix \ref{sec:appendix} and the final section with closing remarks).
\end{enumerate}

Integer partitions, integer compositions, and trees have been widely studied
and they continue to be active research areas, for their own sake and for their applications.
All of them are considered in \cite{FlajoletSedgewick09}, which is a general reference to analytic combinatorics.
A reference that is more focused on (random) trees is \cite{Drmota09}, see also
\cite{Drmota15} and for further developments see \cite{DrmotaJin14,DrmotaJin16, DrmotaJin16_2}. A standard reference to the theory of integer partitions is
Andrews' book \cite{Andrews84}. For more recent developments on integer partitions
see e.g. \cite{BellBurris06, LucaRalaivaosaona16, LipnikMadritsch24}. Integer partitions and trees have been studied together in
\cite{FenerLoizou80,FenerLoizou83, Schmidt2002, ChengLiu2012}.
More recent works on integer compositions include
\cite{
HitczenkoKnopfmacher2005,
BodiniFusyPivoteau10,
BonaKnopfmacher2010,
ArchibaldKnopfmacher2011, 
ArchibaldKnopfmacher2011,
Shapcott2011,
Blecher2012,
LouchardProdinger2013,
Glass2014,
Gafni2015,
MontgomeryTenenbaum2017,
Munagi2018,
Mabry2019,
ArchibaldBlecherKnopfmacherMays2020,
DicskayaMenken23},
and those that study both integer partitions and integer compositions can also be found, e.g.
\cite{Blecher2012, MunagiSellers2018, EngenVatter2019}.
These references are far from exhaustive.

\bigskip

The rest of the paper is organized as follows. In section \S\ref{sec:treesWithFlowers}
we give the combinatorial specifications of trees with flowers, with the corresponding
translation to generating functions. In this section we also present the bivariate
generating functions associated to two parameters on the flowers of trees:
the total number of petals and the total number of edges in all the petals.
In section \S\ref{sec:SingularityAnalysis} we present the asymptotic analysis of the
coefficients of trees with flowers for the first two cases above. In section
\S\ref{sec:finalRemarks} we briefly present final concluding remarks.
Appendix~\ref{sec:appendix} contains tables that summarize which classes
have been registered in OEIS, also those that lack an asymptotic description
despite being registered. In order to exemplify
the results in section \S\ref{sec:SingularityAnalysis},
in the Appendix~\ref{sec:appendix} we also work out all the later cases.

\section{Trees with flowers}
\label{sec:treesWithFlowers}

In this section we define all the combinatorial objects we will analyze.
See \cite{FlajoletSedgewick09} for background.

\subsection{Trees}
\label{sec:treesWithFlowers:trees}

A \emph{tree} will always mean a \emph{rooted-plane tree}.
The size of a tree is its number of edges.
Let $\mathcal K \subseteq \mathbb N^*\triangleq \{1, 2, 3, \ldots \}$
be a subset of the set of positive integers
and then let $\mathcal A$ be the class of all trees such that
the number of descendants of each internal node belongs to $\mathcal K$.
A recursive combinatorial specification of $\mathcal A$ is
\begin{align}
	\label{eq:trees:cs}
	\mathcal{A} \triangleq \mathcal E + \sum\limits_{k\in \mathcal K}(\mathcal Z \times \mathcal{A})^k,
\end{align}
where $\mathcal E$ and $\mathcal Z$ denote a neutral class and an atomic class, respectively.
The generating function of $\mathcal K$ is  $K(z) \triangleq \sum_{k\in\mathcal K}z^k$.
Then the generating function $A(z)$ of $\mathcal A$
satisfies
\begin{align}
	\label{eq:trees:gf}
	A(z) = 1 + K\big(zA(z)\big) .
\end{align}
The functional equation \eqref{eq:trees:gf} on $A(z)$ is linear when
$\mathcal K = \{1\}$ (in this case, the trees are in fact paths),
and is quadratic when $\mathcal K$ equals either
$\mathbb N^*$, $\{2\}$, or $\{1,2\}$, that is, for arbitrary trees, 2-trees,
and binary trees, respectively. In all these cases, simple explicit (and well known) formulas for $A(z)$
are obtained, namely,
\begin{align}
	\mathcal K &= \{1\}  & \Rightarrow \ \ A(z) &= \frac{1}{1-z}  & \oeis{A000012}\\
	\mathcal K &= \{2\}  & \Rightarrow \ \  A(z) & = \frac{1-\sqrt{1-4z^2}}{2z^2}  & \oeis{A000108}^{\dagger} \\
	\mathcal K &= \{1,2\} & \Rightarrow \ \  A(z) & = \frac{1-z-\sqrt{1-2z-3z^2}}{2z^2}  & \oeis{A001006}\\
	\mathcal K &= \mathbb N^* & \Rightarrow \ \ A(z) & = \frac{1-\sqrt{1-4z}}{2z}  & \oeis{A000108}
\end{align}
Above, on the right hand side, the OEIS' records of the corresponding sequences of coefficients of $A(z)$ are shown\footnote{$\dagger$ means that the sequence it is ``essentially'' the same, e.g. in this case the sequence is multiplied by two.}.
These will be the four different classes of trees we will consider.

\subsection{Flowers}
\label{sec:treesWithFlowers:flowers}

Let $\mathcal N \subseteq \mathbb N^*$ be a nonempty subset of the set of
positive integers that
will represent the set of allowed sizes for petals in flowers. Thus an
\emph{$\mathcal N$-flower} will be a flower such that the size of each of its petals
belongs to $\mathcal N$. Let $N(z) \triangleq \sum_{n \in \mathcal N} z^n$ be the generating function of $\mathcal N$ (observe that $N(0)=0$).
We will also restrict $\mathcal N$ to special cases, like
$\{k\}$, $\{1,2\}$, and the whole set $\mathbb N^*$ of positive integers.

\subsubsection{Non-plane flowers: integer partitions}
\label{sec:treesWithFlowers:flowers:non-plane}

\emph{Non-plane flowers}
are combinatorially isomorphic to the class $\mathcal P$
of integer partitions.
Indeed, a non-plane flower is completely determined
by a decreasing sequence of positive integers
$n_1 \geq n_2 \geq \dots \geq n_r$, each representing
the size of each of the petals, and its size
is $n = n_1+n_2+\dots+n_r$, they can be represented e.g. by
Ferrer (also Young) diagrams, or in our case, by non-plane flowers.
Thus, a flower is an integer partition,
and a \emph{petal} is a summand in a partition.
A combinatorial specification and the generating function
of the class $\mathcal P^{\mathcal N}$
of non-plane $\mathcal N$-flowers are
\begin{align}
	\label{eq:partitions}
	\mathcal P^{\mathcal N} \triangleq \textsc{MSet}\left(\sum\limits_{n\in\mathcal N}\mathcal Z^n\right)
	\qquad \textnormal{ and } \qquad
	P^{\mathcal N}(z) \triangleq \prod\limits_{n\in \mathcal N} \frac{1}{1-z^n}.
\end{align}
In particular, let $P(z) \triangleq P^{\mathbb N^*}(z)=\prod_{n\geq 1}1/(1-z^n)$ be the integer partition function.

\subsubsection{Rooted-plane flowers: integer compositions}
\label{sec:treesWithFlowers:flowers:plane}

In a \emph{rooted-plane flower}, the petals are ordered, in particular there is a first petal called the \emph{root}.
A rooted-plane flower is completely determined by a sequence of positive integers $(n_1,\ldots ,n_r)$
and its size is $n=n_1+\ldots + n_r$.
Thus, rooted-plane flowers are combinatorially isomorphic to the class
$\mathcal C$ of integer compositions.
A combinatorial specification and the generating function of
the class $\mathcal C^{\mathcal N}$ of rooted-plane
$\mathcal N$-flowers are
\begin{align}
\label{eq:flowersRP}
	\mathcal C^{\mathcal N} \triangleq \textsc{Seq}\left(\sum\limits_{n\in\mathcal N}\mathcal Z^n\right)
	\qquad \textnormal{ and } \qquad
	C^{\mathcal N}(z) \triangleq \frac{1}{1-N(z)}.
\end{align}

\subsection{Trees with flowers and their recursive specifications}
\label{sec:treesWithFlowers:treesWithFlowers}

Now we put flowers on trees.
Let $\mathcal F$ denote either $\mathcal P^{\mathcal N}$ or $\mathcal C^{\mathcal N}$
and refer to its elements as \emph{$\mathcal F$-flowers}. 
Thus, when we refer to flowers in $\mathcal F$, they can be either non-plane or rooted-plane,
depending on the context that should always be clear.
Let $F(z)$ be the corresponding generating  function of $\mathcal F$.
By the definitions, there is always a neutral class $\mathcal E \in \mathcal F$
that we refer to as the \emph{empty flower} (thus, in particular, $F(0)=1$.)
We let $\mathcal F^*\triangleq \mathcal F \setminus \{\mathcal E\}$ be the class of non-empty flowers,
and thus the generating function of $\mathcal F^*$ is $F^*(z) \triangleq F(z)-1$.
We define the following three main classes of trees with flowers:
\begin{itemize}
	\item
	The class $\mathcal R$ of \emph{$\mathcal K$-trees with $\mathcal F$-flowers on the leaves},
	defined by the recursive combinatorial specification 
	\begin{align}
	\label{eq:treeR}
		\mathcal R \triangleq \mathcal F + \sum\limits_{k \in \mathcal K}(\mathcal Z \times \mathcal R)^k
	\end{align}
	that translated yields the following functional equation on the generating function $R(z)$ of $\mathcal R$:
	\begin{align}
		R(z) = F(z) + K \big( zR(z)\big).
	\end{align}
	\item
	The class $\mathcal S$ of \emph{$\mathcal K$-trees with $\mathcal F$-flowers
	everywhere but on the root}, defined by the recursive combinatorial specification
	\begin{align}
		\mathcal S\triangleq \mathcal E
		+ \sum\limits_{k \in \mathcal K}(\mathcal Z \times \mathcal F \times \mathcal S)^k
	\end{align}
	that translated yields the following functional equation on the generating function $S(z)$ of $\mathcal S$:
	\begin{align}
		S(z) = 1 + K\big( zF(z)S(z)\big).
	\end{align}
	\item
	The class of \emph{$\mathcal K$-trees with $\mathcal F$-flowers everywhere},
	defined by the combinatorial specification
	\begin{align}
		\mathcal T \triangleq \mathcal F \times \mathcal S
	\end{align}
	that translated yields
	\begin{align}
		T(z) & = F(z)S(z).
	\end{align}
\end{itemize}
In addition, we also define the \emph{$*$-classes} $\mathcal R^*$, $\mathcal S^*$ and $\mathcal T^*$
with combinatorial specifications as $\mathcal R$, $\mathcal S$ and $\mathcal T$ above, respectively,
but with the class $\mathcal F$ replaced by $\mathcal F^*$ so that
empty flowers are not allowed (i.e. whenever a flower can be attached to a node in a tree with flowers, then such node has at least one petal attached to it). Thus, the $*$-versions ares classes of \emph{trees with no empty flowers}. The corresponding generating functions
$R^*(z)$, $S^*(z)$ and $T^*(z)$ are also as $R(z)$, $S(z)$ and $T(z)$ above, respectively,
but with $F(z)$ replaced by $F^*(z)$. For example, $\mathcal T^* \triangleq \mathcal F^*\times \mathcal S^*$
with $\mathcal S^* \triangleq \mathcal E + \sum\limits_{k\in\mathcal K}(\mathcal Z \times \mathcal F^* \times \mathcal S^*)^k$, and thus translations yield $T^*(z)=F^*(z)S^*(z)$ with $S^*(z)$ given implicitly by the functional equation $S^*(z) = 1 + \sum\limits_{k\in\mathcal K}\big(zF^*(z)S^*(z)\big)^k$.

Henceforth $f(z)$ will denote $R(z)$, $S(z)$ or $T(z)$, also $R^*(z)$, $S^*(z)$ or $T^*(z)$.

\subsection{Blooming specific types of trees}
\label{subsec:CaseStudies}

Let us present explicit expressions in terms of
$F(z)$ (and $F^*(z)$) of the generating functions
for specific cases of sets of descendants of trees with flowers.

\begin{proposition} [Generating functions of trees with flowers]
\label{prop:gftrees}
The following hold:
\begin{enumerate}
\item
\label{prop:gftrees:1}
\textsc{Arbitrary number of descendants (trees)}.
When $\mathcal K=\mathbb N^*$,
\begin{align}
f(z) = \left\{
\begin{array}{ll}
\frac{1-z+zF(z) - \sqrt{z^2F(z)^2-2(z+z^2)F(z)+(1-z)^2}}{2z}
& \textnormal{ for $\mathcal R$,} \\
\frac{1-\sqrt{1-4zF(z)}}{2zF(z)}
& \textnormal{ for $\mathcal S$ and} \\
\frac{1-\sqrt{1-4zF(z)}}{2z}
& \textnormal{ for $\mathcal T$.} \\
\end{array}
\right.
\end{align}
\item
\label{prop:gftrees:3}
\textsc{Two descendants (2-trees).}
When $\mathcal K=\{2\}$,\\
\begin{align}
f(z) = \left\{
\begin{array}{ll}
\frac{1-\sqrt{1-4z^2F(z)}}{2z^2} & \textnormal{ for $\mathcal R$,} \\
\frac{1-\sqrt{1-4z^2F(z)^2}}{2z^2F(z)^2} & \textnormal{ for $\mathcal S$ and} \\
\frac{1-\sqrt{1-4z^2F(z)^2}}{2z^2F(z)} & \textnormal{ for $\mathcal T$}.
\end{array}
\right.
\end{align}
\item
\label{prop:gftrees:4}
\textsc{At most two descendants (binary trees).}
When $\mathcal K=\{1,2\}$,\\
\begin{align}
f(z)=\left\{
\begin{array}{ll}
\frac{1-z-\sqrt{(1-z)^2-4z^2F(z)}}{2z^2} & \textnormal{ for $\mathcal R$},\\ 
\frac{1-zF(z)-\sqrt{1-2zF(z) - 3z^2F(z)^2}}{2z^2F(z)^2} & \textnormal{ for $\mathcal S$ and}\\ 
\frac{1-zF(z)-\sqrt{1-2zF(z) - 3z^2F(z)^2}}{2z^2F(z)} & \textnormal{ for $\mathcal T$}.
\end{array}
\right.
\end{align}
\item
\label{prop:gftrees:2}
\textsc{One descendant (paths).}
When $\mathcal K=\{1\}$,\\
\begin{align}
\label{eq:gftrees:2}
f(z)=\left\{
\begin{array}{ll}
\frac{F(z)}{1-z} & \textnormal{ for $\mathcal R$},\\ 
\frac{1}{1-zF(z)} & \textnormal{ for $\mathcal S$ and} \\ 
\frac{F(z)}{1-zF(z)} & \textnormal{ for $\mathcal T$}.
\end{array}
\right.
\end{align}
\item
$R^*(z)$, $S^*(z)$ and $T^*(z)$ are as $R(z)$, $S(z)$ and $T(z)$ above, respectively, just replace $F(z)$ with $F^*(z)$.
\end{enumerate}
\end{proposition}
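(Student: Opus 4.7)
The plan is to specialize the general functional equations derived just above the statement, namely
\begin{align*}
R(z) &= F(z) + K\bigl(zR(z)\bigr),\\
S(z) &= 1 + K\bigl(zF(z)S(z)\bigr),\\
T(z) &= F(z)\,S(z),
\end{align*}
by plugging in the appropriate $K(z) = \sum_{k \in \mathcal K} z^k$ for each of the four sets $\mathcal K$ and then solving the resulting equation in $f(z)$. Specifically I would use $K(z) = z$ for $\mathcal K = \{1\}$, $K(z) = z^2$ for $\mathcal K = \{2\}$, $K(z) = z + z^2$ for $\mathcal K = \{1,2\}$, and $K(z) = z/(1-z)$ for $\mathcal K = \mathbb N^*$. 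In items \ref{prop:gftrees:1}--\ref{prop:gftrees:3} the substitution produces a polynomial equation of degree at most two in $f(z)$; e.g.\ for $\mathcal S$ with $\mathcal K = \mathbb N^*$ the equation $S = 1 + zFS/(1-zFS)$ rearranges to $zF(z)S(z)^2 - S(z) + 1 = 0$, and applying the quadratic formula yields the stated expression. The same mechanical substitution handles all of the 2-tree and binary-tree cases.

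For each quadratic I would take the branch determined by the requirement that $f$ is a power series with $f(0)$ equal to the correct initial value ($R(0) = F(0) = 1$, $S(0) = 1$, $T(0) = F(0) = 1$); this forces the minus sign in front of the square root, because the plus branch blows up as $z \to 0$. A short check that the constant term of the radicand equals $1$ in every case (and hence the square root admits an analytic branch near the origin) completes the justification. Item \ref{prop:gftrees:2} is handled separately: with $K(z) = z$ the functional equations become linear, $R(z) = F(z) + zR(z)$, $S(z) = 1 + zF(z)S(z)$, and $T(z) = F(z)S(z)$, solved directly by division to give the three rational formulas in \eqref{eq:gftrees:2}.

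The final item, on $R^*, S^*, T^*$, is immediate from the construction: the only place the flower class enters the specifications is through the factor $\mathcal F$ (resp.\ $F(z)$), which is replaced by $\mathcal F^*$ (resp.\ $F^*(z)$) in the starred versions, so the derivations above go through verbatim with $F$ replaced by $F^*$.

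The computations themselves are routine; the only genuine point requiring care is the branch choice for the square roots, and in particular verifying that the radicand has constant term $1$ so that the Newton--Puiseux/Taylor expansion at the origin produces a bona fide generating function with the desired initial value. Once that is checked in one representative case, the remaining five quadratic formulas follow by the same argument.
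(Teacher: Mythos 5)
Your proposal is correct and is essentially the paper's own argument: the paper's proof consists of the single sentence that each case follows from solving the functional equation arising from the combinatorial specification, which is exactly what you carry out (substituting the relevant $K(z)$, solving the linear or quadratic equation, and selecting the branch compatible with the initial value $f(0)$). The extra care you take with the branch choice and the constant term of the radicand is a welcome elaboration of what the paper leaves implicit, not a departure from it.
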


\begin{proof}
Each case follows from solving the corresponding functional equation that
results from the corresponding (recursive) combinatorial specifications given in Section \S \ref{sec:treesWithFlowers:treesWithFlowers}.
\end{proof}

\subsection{Parameters on the flowers of trees}

Given a bivariate generating function $f(z,u)$
associated to a parameter $\kappa$,
the cumulative generating function is
\begin{align}
\Omega(z)\triangleq \left.\frac{\partial}{\partial u}f(z,u)\right|_{u=1}.
\end{align}
Let a \emph{parameter on the flowers of trees}
be a parameter on a class of trees with flowers with the property
that its corresponding bivariate generating function
$f(z,u)$ can be obtained from $f(z)$
by replacing $F(z)$ by a
suitable bivariate generating function $F(z,u)$.
We will consider two types of parameters on the flowers of trees.
Henceforth we define $F_u(z)\triangleq\left.\frac{\partial}{\partial u}F(z,u)\right|_{u=1}$
(and also $F_u^*(z) \triangleq F_u(z)$).

\subsubsection{Number of petals}

Let $\mathcal \chi$ be the parameter in a class of trees with flowers that
returns the \emph{number of petals}. Then $\chi$ is a parameter on the flowers of trees because
the corresponding bivariate generating functions $f(z,u)$ is obtained from
the formulas of their generating functions by replacing the term $F(z)$ by
\begin{align}
\label{eq:bivariate}
	F(z,u) = \prod\limits_{n \in \mathcal N} \frac{1}{1-uz^n}
\qquad
\textnormal{ or by }
\qquad
	F(z,u) = \frac{1}{1-uN(z)},
\end{align}
depending on whether flowers are non-plane ($\mathcal F = \mathcal P^{\mathcal N}$) or rooted-plane ($\mathcal F = \mathcal C^{\mathcal N}$), respectively.
For reference, let us write down $F_u(z)$ with respect to $\chi$, for both non-plane  and rooted-plane flowers:

\begin{proposition}
\label{prop:FuChi}
For the parameter $\chi$, if flowers are non-plane, then
\begin{align}
\label{eq:FuChiNP}
F_u(z) = {P^{\mathcal N}(z)}\sum\limits_{n\in\mathcal N} \frac{z^n}{1-z^n},
\end{align}
and if flowers are rooted-plane, then
\begin{align}
\label{eq:FuChiRP}
F_u(z) =  \frac{N(z)}{\big(1-N(z)\big)^2}.
\end{align}
\end{proposition}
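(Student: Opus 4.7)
The plan is to compute $F_u(z) = \partial_u F(z,u)|_{u=1}$ directly from the two explicit forms for $F(z,u)$ given in \eqref{eq:bivariate}, since the statement of Proposition \ref{prop:FuChi} is essentially a pair of bookkeeping identities. The main idea is that the substitution $z^n \mapsto u z^n$ (for each petal of size $n$) in the specifications \eqref{eq:partitions} and \eqref{eq:flowersRP} marks each petal by a factor $u$, so that differentiation at $u=1$ yields the cumulative count of petals.

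For the non-plane case, I will take the logarithm of $F(z,u) = \prod_{n\in\mathcal N}(1-uz^n)^{-1}$ to turn the product into a sum, obtaining
\begin{align}
\log F(z,u) = -\sum_{n\in\mathcal N}\log(1-uz^n).
\end{align}
Differentiating with respect to $u$ gives
\begin{align}
\frac{1}{F(z,u)}\frac{\partial F(z,u)}{\partial u} = \sum_{n\in\mathcal N}\frac{z^n}{1-uz^n},
\end{align}
and setting $u=1$ together with $F(z,1) = P^{\mathcal N}(z)$ immediately yields \eqref{eq:FuChiNP}.

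For the rooted-plane case, $F(z,u) = (1-uN(z))^{-1}$ is a function of the single variable $uN(z)$ in $u$, so a direct differentiation gives
\begin{align}
\frac{\partial F(z,u)}{\partial u} = \frac{N(z)}{\big(1-uN(z)\big)^2},
\end{align}
and evaluating at $u=1$ yields \eqref{eq:FuChiRP}. There is no genuine obstacle here: both identities follow from one-line calculations (logarithmic differentiation in the product case, chain rule in the rational case), and the only thing worth emphasizing in the write-up is why marking petals by $u$ is justified combinatorially—namely, that each term $z^n$ in \eqref{eq:partitions} and \eqref{eq:flowersRP} records exactly one petal of size $n$, so that tagging $z^n$ with $u$ counts petals with multiplicity matching $\chi$.
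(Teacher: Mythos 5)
Your computation is correct and is exactly the routine differentiation of the bivariate generating functions in \eqref{eq:bivariate} that the paper leaves implicit (it states Proposition \ref{prop:FuChi} without a written proof). Both the logarithmic differentiation in the multiset case and the chain rule in the sequence case check out, and your remark on why marking $z^n$ by $u$ implements the petal count $\chi$ is the right combinatorial justification.
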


\subsubsection{Number of edges in petals}

Let $\mathcal \xi$ be the parameter in a class of trees with flowers that returns the \emph{number of edges in the petals}. Then $\xi$ is a parameter on the flowers of trees because
the corresponding bivariate generating functions $f(z,u)$ is obtained from the formulas of their generating functions by replacing the term $F(z)$ by $F(z,u)=F(uz)$, that is, by either
\begin{align}
	F(z,u) = \prod\limits_{n \in \mathcal N} \frac{1}{1-u^nz^n}
\qquad
\textnormal{ or }
\qquad
	F(z,u) = \frac{1}{1-N(uz)},
\end{align}
depending on whether flowers are non-plane ($\mathcal F = \mathcal P^{\mathcal N}$) or rooted-plane ($\mathcal F = \mathcal C^{\mathcal N}$),
respectively. 
Again, for reference, let us write down $F_u(z)$ with respect to $\xi$ for both rooted-plane and non-plane flowers:

\begin{proposition}
\label{prop:FuXi}
For the parameter $\xi$, if flowers are non-plane, then
\begin{align}
\label{eq:FuXiNP}
F_u(z) = {P^{\mathcal N}(z)}\sum\limits_{n\in\mathcal N} \frac{nz^n}{1-z^n},
\end{align}
and if flowers are rooted-plane, then
\begin{align}
\label{eq:FuXiRP}
F_u(z) = \frac{zN'(z)}{\big(1-N(z)\big)^2}.
\end{align}
\end{proposition}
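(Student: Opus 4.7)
The plan is to verify both identities by direct differentiation of the bivariate generating functions described in the paragraph immediately preceding the proposition, since those expressions come from the $u \mapsto uz$ substitution that characterizes the parameter $\xi$.

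First I would handle the non-plane case. Here $F(z,u)=\prod_{n\in\mathcal N}(1-u^n z^n)^{-1}$, so I would take the logarithm to split the infinite product into a sum:
\begin{align*}
\log F(z,u) = -\sum_{n\in\mathcal N}\log(1-u^n z^n).
\end{align*}
Differentiating termwise with respect to $u$ gives
\begin{align*}
\frac{1}{F(z,u)}\frac{\partial F(z,u)}{\partial u} = \sum_{n\in\mathcal N}\frac{n u^{n-1}z^n}{1-u^n z^n},
\end{align*}
and specializing at $u=1$ (using $F(z,1)=P^{\mathcal N}(z)$) yields \eqref{eq:FuXiNP}. A brief mention that termwise differentiation is legitimate on the formal power series level (or inside the disk of convergence of $P^{\mathcal N}(z)$) closes this half.

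For the rooted-plane case the argument is even shorter: starting from $F(z,u)=(1-N(uz))^{-1}$ and applying the chain rule gives
\begin{align*}
\frac{\partial F(z,u)}{\partial u} = \frac{z N'(uz)}{\bigl(1-N(uz)\bigr)^2},
\end{align*}
and setting $u=1$ produces \eqref{eq:FuXiRP}.

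No serious obstacle arises; the only point that might deserve an extra line is justifying the interchange of the partial derivative with the infinite product in the non-plane case, which follows from viewing $F(z,u)$ as a formal power series in $z$ (each coefficient is a polynomial in $u$), or alternatively from standard uniform-convergence arguments on compacta inside the common region of convergence.
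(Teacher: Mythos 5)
Your computation is correct and is exactly the routine verification the paper leaves implicit: Proposition~\ref{prop:FuXi} is stated ``for reference'' with no written proof, the intended argument being precisely the direct differentiation of $F(z,u)=\prod_{n\in\mathcal N}(1-u^nz^n)^{-1}$ (via the logarithm) and of $F(z,u)=(1-N(uz))^{-1}$ (via the chain rule) at $u=1$. Your remark on justifying termwise differentiation at the formal power series level is a harmless extra precaution; nothing further is needed.
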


\subsubsection{Cumulative generating functions}

Let us write down explicitly the cumulative generating functions $\Omega(z)$ in terms of $F(z,u)$ of a parameter on the flowers of trees.

\begin{proposition}[Cumulative generating functions of parameters on the flowers of trees]
\label{prop:gftreesPetals}
If $\kappa$ is a parameter on the flowers of trees on one of the classes in Proposition \ref{prop:gftrees} (so that the corresponding bivariate generating function $f(z,u)$ 
is obtained by replacing $F(z)$ by $F(z,u)$), then the following hold:
\begin{enumerate}
\item
\label{prop:gftreesPetals:1}
\textsc{Arbitrary number of descendants (trees)}.
When $\mathcal K=\mathbb N^*$,
\begin{align}
\Omega(z)=\left\{
\begin{array}{ll}
\frac{(1+z- zF(z))F_u(z)}{2\sqrt{z^2 F(z)^2-2 (z+z^2) F(z)+(1-z)^2}}+
\frac{F_u(z)}{2} & \textnormal{ for $\mathcal R$,} \\
\frac{\big(1-2 z F(z)\big)F_u(z)}{2 z F(z)^2 \sqrt{1-4 z F(z)}}- \frac{F_u(z)}{2 z F(z)^2} & \textnormal{ for $\mathcal S$ and} \\
\frac{F_u(z)}{\sqrt{1-4 z F(z)}}& \textnormal{ for $\mathcal T$.}
\end{array}
\right.
\end{align}
\item
\label{prop:gftreesPetals:3}
\textsc{Two descendants (2-trees).}
When $\mathcal K=\{2\}$,\\
\begin{align}
\Omega(z)=\left\{
\begin{array}{ll}
\frac{F_u(z)}{\sqrt{1-4 z^2 F(z)}} & \textnormal{ for $\mathcal R$,} \\
\frac{\big(1-2 z^2 F(z)^2\big)F_u(z)}{z^2 F(z)^3 \sqrt{1-4 z^2 F(z)^2}}
-\frac{F_u(z)}{z^2 F(z)^3} & \textnormal{ for $\mathcal S$ and} \\
\frac{F_u(z)}{2 z^2 F(z)^2\sqrt{1-4 z^2 F(z)^2}} - \frac{F_u(z)}{2 z^2 F(z)^2} & \textnormal{ for $\mathcal T$.}
\end{array}
\right.
\end{align}
\item
\label{prop:gftreesPetals:4}
\textsc{At most two descendants (binary trees).}
When $\mathcal K=\{1,2\}$,\\
\begin{align}
\Omega(z)=\left\{
\begin{array}{ll}
\frac{F_u(z)}{\sqrt{(1-z)^2-4 z^2 F(z)}} & \textnormal{ for $\mathcal R$,} \\
\frac{\big(2-3zF(z)-3 z^2 F(z)^2\big)F_u(z)}{2 z^2 F(z)^3 \sqrt{1-2 z F(z)-3 z^2 F(z)^2}}
+\frac{(zF(z)-2)F_u(z)}{2 z^2 F(z)^3 } & \textnormal{ for $\mathcal S$ and} \\
\frac{\big(1-zF(z)\big)F_u(z)}{2 z^2 F(z)^2 \sqrt{1-2 z F(z)-3 z^2 F(z)^2}} - \frac{F_u(z)}{2 z^2 F(z)^2 } & \textnormal{ for $\mathcal T$.} 
\end{array}
\right.
\end{align}
\item
\label{prop:gftreesPetals:2}
\textsc{One descendant (paths).}
When $\mathcal K=\{1\}$,\\
\begin{align}
\label{eq:gftreesPetals:2}
\Omega(z)=\left\{
\begin{array}{ll}
\frac{F_u(z)}{1-z} & \textnormal{ for $\mathcal R$,} \\
\frac{z F_u(z)}{(1-z F(z))^2} & \textnormal{ for $\mathcal S$ and} 
\\
\frac{F_u(z)}{(1-z F(z))^2} & \textnormal{ for $\mathcal T$.}
\end{array}
\right.
\end{align}
\item
\textsc{No empty flowers.}
The corresponding cumulative distribution functions $\Omega(z)$ for the classes $\mathcal R^*$, $\mathcal S^*$ and $\mathcal T^*$ are obtained as in 1-4 above, respectively, just replace $F(z)$ by $F^*(z)$.
\end{enumerate}
\end{proposition}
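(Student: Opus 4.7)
The plan is to reduce the entire proposition to a single application of the chain rule. By the definition of a parameter on the flowers of trees, the bivariate generating function $f(z,u)$ is obtained from $f(z)$ by replacing $F(z)$ with $F(z,u)$, so each closed form in Proposition \ref{prop:gftrees} may be rewritten as $g(z,F)$ for a two-variable function $g$, and then $f(z,u) = g\bigl(z,F(z,u)\bigr)$. Since $F(z,1)=F(z)$, differentiating in $u$ and specialising at $u=1$ gives
\[
\Omega(z) \;=\; \left.\frac{\partial g}{\partial F}(z,F)\right|_{F=F(z)} \cdot F_u(z).
\]
Thus the proposition follows once $\partial g/\partial F$ is computed for each of the twelve closed forms in Proposition \ref{prop:gftrees}, evaluated at $F=F(z)$, and multiplied by $F_u(z)$.

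The individual verifications are then routine differentiation. I would first dispatch the three $\mathcal T$-rows, where $g$ has the form $(1-\sqrt{D(z,F)})/(2zF^{\varepsilon})$: differentiation of the square root produces the characteristic factor $1/\sqrt{D}$ evaluated at $F=F(z)$, which already matches the singular terms in the statement. Next I would do the $\mathcal R$-rows, where the denominator is $F$-free and a single differentiation yields the stated formula directly. Finally I would do the $\mathcal S$-rows, where the denominator itself depends on $F$; here the quotient rule gives two terms which, after rationalising by $\sqrt{D}$ and collecting, split cleanly into a singular piece proportional to $1/\sqrt{D}$ and a rational remainder, matching the displayed expressions.

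For the $*$-classes nothing new is needed: the closed forms in Proposition \ref{prop:gftrees} have the same functional shape under $F \leftrightarrow F^*$, and by the convention $F_u^*(z)=F_u(z)$ the same computation produces the stated formulas with $F$ replaced throughout by $F^*$.

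The main obstacle is the algebraic bookkeeping in the $\mathcal K=\{1,2\}$ rows for $\mathcal S$ and $\mathcal T$: there the discriminant is $1-2zF-3z^2F^2$ and $F$ appears polynomially in both the numerator and denominator of $g$, so the quotient rule produces several terms that have to be combined over the common surd and then separated into singular and rational parts in order to match the displayed closed forms. No conceptual difficulty arises; the content of the proposition is the chain rule together with a dozen mechanical derivatives.
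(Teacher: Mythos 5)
Your proposal is correct and coincides with what the paper does implicitly: the paper states this proposition without a written proof, treating it as exactly the routine chain-rule computation $\Omega(z)=\left.\partial_F g\right|_{F=F(z)}\cdot F_u(z)$ applied to each closed form of Proposition \ref{prop:gftrees}, which is what you carry out. Spot-checking (e.g.\ the $\mathcal S$ and $\mathcal R$ rows for $\mathcal K=\mathbb N^*$) confirms your derivatives reproduce the displayed formulas, so nothing further is needed.
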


\section{Singularity analysis}
\label{sec:SingularityAnalysis}

The asymptotic behaviour of the coefficients of the generating functions
of the classes of trees with flowers we have seen can
be deduced with several methods
according to the cases. When $\mathcal K$ equals either $\mathbb N^*$, $\{2\}$ or $\{1,2\}$ (see \ref{prop:gftrees:1}, \ref{prop:gftrees:3} and \ref{prop:gftrees:4} in Proposition \ref{prop:gftrees}), the dominant singularities occur within the set of zeros
of the radicands in the corresponding formulas of the generating functions.
These radicands depend on $\mathcal N$. For example, if
$\mathcal N$ equals $\{k\}$, $\{1,2\}$ and $\mathbb N^*$, then
we get generating functions amenable of singularity analysis,
with dominant singularities which are branch points of square root type.
Theorem \ref{theo:analysis_single} covers these situations.

When $\mathcal K=\{1\}$, the generating functions are rational if $F(z)$ is rational,
for example when $\mathcal N$ is finite, or when $\mathcal N$ equals $\mathbb N^*$
and flowers are rooted-plane. Theorems \ref{thm:meromorphicTrees} and \ref{thm:meromorphicNonPlane} cover these situations. 

When $\mathcal K=\{1\}$, $\mathcal N = \mathbb N^*$ and flowers are non-plane,
then for the class $\mathcal R$ the generating functions
are certain infinite products that can be analyzed with well known technique
like the ones used in Meinardus' theorem. We will not address these cases
and again we refer the reader to the last section with further remarks on this.

\subsection{Branch cuts of square root type}

When $\mathcal K$ equals $\mathbb N^*$, $\{2\}$ or $\{1,2\}$
(cases \ref{prop:gftrees:1}, \ref{prop:gftrees:3} and \ref{prop:gftrees:4} in Proposition \ref{prop:gftrees}),
the positive real dominant singularitiy $\zeta$ occurs as a zero of the
radicand $p(z)$ in the formulas for $f(z)$ and $\Omega(z)$ (for example, 
if $\mathcal K = \mathbb N^*$, then $p(z) = z^2F(z)^2-2(z+z^2)F(z) + (1-z)^2$
for $\mathcal R$, $p(z) = 1-4zF(z)$ for both $\mathcal S$ and $\mathcal T$, and $p(z) = 1-4zF^*(z)$ for both $\mathcal S^*$ and $\mathcal T^*$, etc.).
Thus, for all these cases, the asymptotic behavior of the coefficients of the generating functions
can be obtained by the process of singularity analysis
if the generating function is
\emph{amenable} to this process, which means that it
must satisfy the conditions of singularity analysis as expressed in
Theorem VI.4 (single dominant singularity\footnote{In \cite{FlajoletSedgewick09} there is also Theorem VI.5
for multiple dominant singularities. We will encounter multiple dominant singularities in some examples in the Appendix (see cases \oeis{A052702}, \oeis{A023431}). For simplicity we only state Theorem \ref{theo:analysis_single} which is for a single dominant singularity and leave the corresponding statement of Theorem VI.5 as an exercise.}) in \cite{FlajoletSedgewick09}.
Let us state this result in our context.
Recall that a \emph{$\Delta$-domain}
$\Delta_0\subset \mathbb C$ is a domain of the form
\begin{align}
	\Delta_0 = \Delta(\phi,R) \triangleq
	\{ z \ : \ |z|<R, \ z \neq 1, \ |\textnormal{arg}(z-1)| > \phi \}
\end{align}
for some $R>1$ and $0 < \phi < \frac{\pi}{2}$, and that
$\zeta \cdot \Delta_0$ denotes the image
by the mapping $z\mapsto \zeta z$, with $\zeta \in \mathbb C$.
	
\begin{theorem} [Single singularity analysis of trees with flowers]
	\label{theo:analysis_single}
	Suppose that $\mathcal K$ equals either $\mathbb N^*$, $\{2\}$ or $\{1,2\}$.
	Let $\zeta$ be the real dominant singularity of $f(z)$, it is 
	the smallest positive root of $p(z)$.
	Suppose that $f(z)$ can be continued to a domain of the form
	$\zeta \cdot \Delta_0$, where $\Delta_0$ is a $\Delta$-domain.
	In addition, suppose that $\zeta$ is a simple zero of $p(z)$ and let
	\begin{align}
		\theta(n) = \frac{\sqrt{-\zeta p'(\zeta)}}{2\sqrt{\pi n^3}}
		\qquad
		\textnormal{ and }
		\qquad
		\vartheta(n) = \frac{1}{\sqrt{-\zeta p'(\zeta)\pi n}}.
	\end{align}
	\begin{enumerate}
	\item \textsc{Arbitrary number of descendants (trees)}.
		If $\mathcal K = \mathbb N^*$, then
		\begin{align}
			[z^n]f(z) \sim
			\left\{
			\begin{array}{rl}
				\frac{1}{2\zeta} \cdot \theta(n) \cdot \zeta^{-n} & \textnormal{ \emph{ for $\mathcal R$,}} \\
				\frac{1}{2\zeta F(\zeta)} \cdot \theta(n)\cdot \zeta^{-n} & \textnormal{ \emph{ for $\mathcal S$ and}}  \\ 
				\frac{1}{2\zeta} \cdot \theta(n) \cdot \zeta^{-n} & \textnormal{ \emph{ for $\mathcal T$.}} \\
			\end{array}
			\right.
		\end{align}
		Also, for a parameter $\kappa$ on the flowers of trees,
		\begin{align}
			[z^n]\Omega(z) \sim
			\left\{
			\begin{array}{rl}
				\frac{\big(1+\zeta-\zeta F(\zeta)\big)F_u(\zeta)}{2} \cdot \vartheta(n) \cdot \zeta^{-n} & \textnormal{ \emph{ for $\mathcal R$,}} \ \\ 
				\frac{\big(1-2\zeta F(\zeta)\big)F_u(\zeta)}{2\zeta F(\zeta)^2} \cdot \vartheta(n) \cdot \zeta^{-n} & \textnormal{ \emph{ for $\mathcal S$ and}} \\
				F_u(\zeta) \cdot \vartheta(n) \cdot \zeta^{-n} & \textnormal{ \emph{ for $\mathcal T$,}} \\
			\end{array}
			\right.
		\end{align}
		hence
		\begin{align}
			\mathbb E_{\Theta_n} (\kappa) 
			\sim
			\left\{
			\begin{array}{rl}
				\frac{2\big(1+\zeta-\zeta F(\zeta)\big)F_u(\zeta)}{-p'(\zeta)} \cdot n
				& \textnormal{ \emph{ if $\Theta = \mathcal R$,}} \ \\
				\frac{2\big(1-2\zeta F(\zeta)\big)F_u(\zeta)}{-\zeta F(\zeta)p'(\zeta)} \cdot  n
				& \textnormal{ \emph{ if $\Theta = \mathcal S$ and}} \\
				\frac{4F_u(\zeta)}{-p'(\zeta)} \cdot n & \textnormal{ \emph{ if $\Theta = \mathcal T$.}} \\
			\end{array}
			\right.
		\end{align}
	\item \textsc{Two descendants (2-trees).}
		If $\mathcal K = \{2\}$, then
		\begin{align}
			[z^n]f(z) \sim
			\left\{
			\begin{array}{rl}
				\frac{1}{2\zeta^2} \cdot \theta(n) \cdot \zeta^{-n} & \textnormal{ \emph{ for $\mathcal R$,}} \ \\ 
				\frac{1}{2\zeta^2 F(\zeta)^2} \cdot \theta(n) \cdot \zeta^{-n} & \textnormal{ \emph{ for $\mathcal S$ and}} \ \\ 
				\frac{1}{2\zeta^2 F(\zeta)} \cdot \theta(n)\cdot \zeta^{-n} & \textnormal{ \emph{ for $\mathcal T$.}}
			\end{array}
			\right.
		\end{align}
		Also, for a parameter $\kappa$ on the flowers of trees,
		\begin{align}
			[z^n]\Omega(z) \sim
			\left\{
			\begin{array}{rl}
				F_u(\zeta) \cdot \vartheta(n) \cdot \zeta^{-n} & \textnormal{ \emph{ for $\mathcal R$,}} \ \\ 
				\frac{\big(1-2\zeta^2 F(\zeta)^2\big)F_u(\zeta)}{\zeta^2 F(\zeta)^3} \cdot \vartheta(n) \cdot \zeta^{-n} & \textnormal{ \emph{ for $\mathcal S$ and}} \\
				\frac{F_u(\zeta)}{2\zeta^2F(\zeta)^2} \cdot \vartheta(n) \cdot \zeta^{-n} & \textnormal{ \emph{ for $\mathcal T$,}} \\
			\end{array}
			\right.
		\end{align}
		 hence
		\begin{align}
			\mathbb E_{\Theta_n} (\kappa) 
			\sim
			\left\{
			\begin{array}{rl}
				\frac{4\zeta F_u(\zeta)}{-p'(\zeta)} \cdot n
				& \textnormal{ \emph{ if $\Theta = \mathcal R$,}} \ \\ 
				\frac{2\big(1-2\zeta F(\zeta)\big)F_u(\zeta)}{-\zeta F(\zeta)p'(\zeta)} \cdot  n 
				& \textnormal{ \emph{ if $\Theta = \mathcal S$ and}} \\
				\frac{4 F_u(\zeta)}{-p'(\zeta)} \cdot n & \textnormal{ \emph{ if $\Theta = \mathcal T$.}} \\
			\end{array}
			\right.
		\end{align}
	\item \textsc{At most two descendants (binary trees).}
		If $\mathcal K = \{1,2\}$, then
		\begin{align}
			[z^n]f(z) \sim
			\left\{
			\begin{array}{rl}
				\frac{1}{2\zeta^2} \cdot \theta(n) \cdot \zeta^{-n} & \textnormal{ \emph{ for $\mathcal R$,}} \ \\
				\frac{1}{2\zeta^2 F(\zeta)^2} \cdot \theta(n) \cdot \zeta^{-n} & \textnormal{ \emph{ for $\mathcal S$ and}} \ \\ 
				\frac{1}{2\zeta^2 F(\zeta)} \cdot \theta(n) \cdot \zeta^{-n} & \textnormal{ \emph{ for $\mathcal T$.}}
			\end{array}
			\right.
		\end{align}
		Also, for a parameter $\kappa$ on the flowers of trees,
		\begin{align}
			[z^n]\Omega(z) \sim
			\left\{
			\begin{array}{rl}
				F_u(\zeta) \cdot \vartheta(n) \cdot \zeta^{-n} & \textnormal{ \emph{ for $\mathcal R$,}} \ \\
				\frac{\big(2-3\zeta F(\zeta) -3\zeta^2F(\zeta)^2\big)F_u(\zeta)}{2\zeta^2 F(\zeta)^3} \cdot \vartheta(n) \cdot \zeta^{-n} & \textnormal{ \emph{ for $\mathcal S$ and}} \\
				\frac{\big(1-\zeta F(\zeta)\big)F_u(\zeta)}{2\zeta^2F(\zeta)^2} \cdot \vartheta(n) \cdot \zeta^{-n} & \textnormal{ \emph{ for $\mathcal T$,}} \\
			\end{array}
			\right.
		\end{align}
		hence
		\begin{align}
			\mathbb E_{\Theta_n} (\kappa) 
			\sim
			\left\{
			\begin{array}{rl}
				\frac{4\zeta^2 F_u(\zeta)}{-\zeta p'(\zeta)} \cdot n
				& \textnormal{ \emph{ if $\Theta = \mathcal R$,}} \ \\ 
				\frac{2\big(2-3\zeta F(\zeta) -3\zeta^2F(\zeta)^2\big)F_u(\zeta)}{-\zeta F(\zeta)p'(\zeta)} \cdot n 
				& \textnormal{ \emph{ if $\Theta = \mathcal S$ and}} \\
				\frac{2\big(1-\zeta F(\zeta)\big)F_u(\zeta)}{-\zeta F(\zeta)p'(\zeta)} \cdot n & \textnormal{ \emph{ if $\Theta = \mathcal T$.}} \\
			\end{array}
			\right.
		\end{align}
		\item \textsc{No empty flowers.}
		The corresponding asymptotic growths
		for the classes $\mathcal R^*$, $\mathcal S^*$ and $\mathcal T^*$ and
		a parameter $\kappa$ on the flowers of trees are as in 1-3 above,
		respectively, just replace $F(z)$ by $F^*(z)$.
	\end{enumerate}
\end{theorem}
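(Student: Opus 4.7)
The plan is to invoke the standard singularity analysis transfer theorem (Theorem VI.4 of \cite{FlajoletSedgewick09}) on each generating function, exploiting the fact that, in every case of Proposition \ref{prop:gftrees} with $\mathcal K\in\{\mathbb N^*,\{2\},\{1,2\}\}$, the formula for $f(z)$ has the shape
\begin{align*}
f(z)=A(z)-B(z)\sqrt{p(z)},
\end{align*}
with $A(z)$ and $B(z)$ analytic at $\zeta$ and $B(\zeta)\neq 0$, while $p(z)$ vanishes simply at $\zeta$. The $\Delta$-analyticity is assumed in the hypotheses, so the only analytic input we actually need is the local expansion of the radicand.

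First, I would expand $p(z)$ around $\zeta$: since $p(\zeta)=0$ and $p'(\zeta)\neq 0$,
\begin{align*}
p(z)=-\zeta p'(\zeta)\bigl(1-z/\zeta\bigr)\bigl(1+O(1-z/\zeta)\bigr),
\end{align*}
so that $\sqrt{p(z)}=\sqrt{-\zeta p'(\zeta)}\,\sqrt{1-z/\zeta}\,(1+O(1-z/\zeta))$ (the sign of $p'(\zeta)$ is negative, as $p$ is positive on $[0,\zeta)$ by construction of $\zeta$ as the smallest positive root). Substituting into $f(z)$ produces the singular expansion
\begin{align*}
f(z)=A(\zeta)-B(\zeta)\sqrt{-\zeta p'(\zeta)}\,\sqrt{1-z/\zeta}+O\bigl(1-z/\zeta\bigr).
\end{align*}
Applying the transfer $[z^n]\sqrt{1-z/\zeta}\sim -\zeta^{-n}/(2\sqrt{\pi n^3})$ and reading off $A(\zeta),B(\zeta)$ from each entry of Proposition \ref{prop:gftrees} yields the announced formulas $[z^n]f(z)\sim B(\zeta)\,\theta(n)\,\zeta^{-n}$; for example, for $\mathcal R$ with $\mathcal K=\mathbb N^*$ one reads $B(\zeta)=1/(2\zeta)$, etc. This settles the first block of each part of the theorem.

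Next, for a parameter $\kappa$ on the flowers of trees, $\Omega(z)=\partial_u f(z,u)|_{u=1}$ is computed by differentiating the formulas of Proposition \ref{prop:gftreesPetals} (or equivalently, by the chain rule applied to $f(z,u)$, each copy of $F(z,u)$ producing an $F_u(z)$ upon setting $u=1$). The derivative brings down a factor of $1/\sqrt{p(z)}$ from differentiating $\sqrt{p(z)}$ when it appears inside $f$; hence, near $\zeta$,
\begin{align*}
\Omega(z)=\frac{H(\zeta)\,F_u(\zeta)}{\sqrt{-\zeta p'(\zeta)}}\,(1-z/\zeta)^{-1/2}+O(1),
\end{align*}
where $H(z)$ is the appropriate analytic coefficient from Proposition \ref{prop:gftreesPetals}. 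Using $[z^n](1-z/\zeta)^{-1/2}\sim \zeta^{-n}/\sqrt{\pi n}$ and the definition of $\vartheta(n)$ gives $[z^n]\Omega(z)\sim H(\zeta)F_u(\zeta)\,\vartheta(n)\,\zeta^{-n}$, matching the stated constants in each row.

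Finally, the expectation follows from $\mathbb E_{\Theta_n}(\kappa)=[z^n]\Omega(z)/[z^n]f(z)$ together with the identity
\begin{align*}
\frac{\vartheta(n)}{\theta(n)}=\frac{2n}{-\zeta p'(\zeta)},
\end{align*}
which turns the ratio into the $n$-linear formulas stated. The no-empty-flowers case (part 4) is automatic: the combinatorial specifications of $\mathcal R^*,\mathcal S^*,\mathcal T^*$ coincide with those of $\mathcal R,\mathcal S,\mathcal T$ after replacing $\mathcal F$ by $\mathcal F^*$, so every step above applies verbatim with $F$ replaced by $F^*$. The main obstacle is purely bookkeeping: keeping track of the analytic prefactor $B(\zeta)$ (resp.\ $H(\zeta)$) across nine cases without confusing the $1/(2z)$, $1/(2zF(z))$, $1/(2z^2F(z)^2)$, etc.\ coefficients that distinguish $\mathcal R$, $\mathcal S$, and $\mathcal T$; once these are extracted from Propositions \ref{prop:gftrees} and \ref{prop:gftreesPetals}, the asymptotics follow directly from a single transfer.
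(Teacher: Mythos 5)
Your proposal is correct and follows essentially the same route as the paper: the paper's proof is just the observation (via l'H\^opital) that $p(z)\sim -p'(\zeta)(\zeta-z)=-\zeta p'(\zeta)(1-z/\zeta)$ near $\zeta$, followed by an appeal to the transfer theorems of \cite{FlajoletSedgewick09}, which is exactly the local expansion of the radicand plus coefficient transfer that you carry out in more detail. Your extraction of the prefactors $B(\zeta)$ and $H(\zeta)$ from Propositions \ref{prop:gftrees} and \ref{prop:gftreesPetals}, and the ratio $\vartheta(n)/\theta(n)=2n/(-\zeta p'(\zeta))$ for the expectations, all check out.
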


\begin{proof}
	By l'H\^opital's rule,
	$$
		\lim\limits_{z\to \zeta } \frac{p(z)}{\zeta-z} = -p'(\zeta)
	$$
	and hence
	the result follows from Theorem~VI.1 in \cite{FlajoletSedgewick09}.
\end{proof}

\subsection{Meromorphic extensions}

When $\mathcal K = \{1\}$, $F(z)$ can be a rational generating function, for example
when $\mathcal N$ is finite, or when $\mathcal N = \mathbb N^*$ and the flowers are rooted-plane.
Other restriction sets $\mathcal N$ may yield generating functions that admit
meromorphic extensions on discs of radius greater than their radii
of convergence. When this occurs,
we can apply Theorem IV.9 in \cite{FlajoletSedgewick09}. Let us adapt this theorem to our particular context.

\begin{theorem} [Asymptotics for meromorphic functions of $1$-trees with flowers]
\label{thm:meromorphicTrees}
Assume that $\mathcal K = \{1\}$.
Suppose that $f(z)$ has a single dominant singularity at $\alpha>0$ which is a pole of order $r\geq 1$.
Then the following hold: 
\begin{enumerate}
	\item
	\label{eq:Mero1}
		Consider the class $\mathcal R$. If $\mathcal N=\{k\}$,
		then $\alpha=1$ and $r=2$. Hence
		\begin{align}
		\label{eq:Rk}
			[z^n]R(z)  \sim \frac{n}{k}
		\end{align}
		and also
		\begin{align}
		\label{eq:RkPara}
			[z^n]\Omega(z)  \sim \left\{
			\begin{array}{ll}
			\frac{n^2}{2k^2} & \textnormal{ for $\chi$ and}\\
			\frac{n^2}{2k} & \textnormal{ for $\xi$},
			\end{array}
			\right.
		\end{align}
		thus
		\begin{align}
		\mathbb E_{\mathcal R_n}(\chi) = \frac{n}{2k}
		\quad \textnormal{ and } \quad
		\mathbb E_{\mathcal R_n}(\xi) = \frac{n}{2}.
		\end{align}
		Otherwise, if $\mathcal N$ possesses at
		least two elements and flowers are rooted-plane, then $0<\alpha < 1$ is the root of $1-N(z)$, it
		is a simple pole (i.e. $r=1$),
		\begin{align}
		\label{eq:R2f}
			[z^n]R(z)  \sim\frac{1}{\alpha(1-\alpha)N'(\alpha)}\alpha^{-n} 
		\end{align}
		and also
		\begin{align}
		\label{eq:R2Para}
			[z^n]\Omega(z) 
			\sim\left\{
			\begin{array}{ll}
			 \frac{N(\alpha)}{\alpha^{2}(1-\alpha)N'(\alpha)^2}n\alpha^{-n}
			 & \textnormal{ for $\chi$ and} \\
			 \frac{1}{\alpha (1-\alpha)N'(\alpha)}n\alpha^{-n}
			 & \textnormal{ for $\xi$,}
			\end{array}
			\right.
		\end{align}
		hence
		\begin{align}
		\mathbb E_{\mathcal R_n}(\chi) = \frac{N(\alpha)}{\alpha N'(\alpha)}n
		\quad \textnormal{ and } \quad
		\mathbb E_{\mathcal R_n}(\xi) = \frac{1}{N'(\alpha)}n.
		\end{align}
		\item
		\label{eq:Mero2}
		Consider the class $\mathcal S$. Then we have that $0 < \alpha < 1$ is the root of $1-zF(z)$,
		again it is a simple pole,
		\begin{align}
		\label{eq:S2}
			[z^n]S(z) & \sim \frac{1}{\alpha \big(F(\alpha)+\alpha F'(\alpha)\big)} \alpha^{-n}
		\end{align}
		and also, for a parameter $\kappa$ on the flowers of trees, we always have
		\begin{align}
		\label{eq:SP2}
			[z^n]\Omega(z)
			\sim \frac{F_u(\alpha)}{\alpha\big(F(\alpha)+\alpha F'(\alpha) \big)^2}n \alpha^{-n} ,
		\end{align}
		hence
		\begin{align}
			\mathbb E_{\mathcal S_n}(\kappa) \sim \frac{F_u(\alpha)}{\big(F(\alpha)+\alpha F'(\alpha) \big)}n  .
		\end{align}
		\item
		\label{eq:Mero3}
		For the class $\mathcal T$, again we have that $0 < \alpha < 1$ is the positive root of $1-zF(z)$, it is a simple pole,
		\begin{align}
			[z^n]T(z) & \sim \frac{F(\alpha)}{\alpha \big(F(\alpha)+\alpha F'(\alpha)\big)} \alpha^{-n}
		\end{align}
		and also, for a parameter $\kappa$ on the flowers of trees, we always have
		\begin{align}
			[z^n]\Omega(z)
			\sim \frac{F_u(\alpha)}{\alpha^2\big(F(\alpha)+\alpha F'(\alpha) \big)^2}n \alpha^{-n} ,
		\end{align}
		hence
		\begin{align}
			\mathbb E_{\mathcal T_n}(\kappa) \sim \frac{F_u(\alpha)}{\alpha F(\alpha) \big(F(\alpha)+\alpha F'(\alpha) \big)}n  .
		\end{align}
		\item
		The corresponding asymptotic growths for both
		the classe $\mathcal R^*$, in 1 replace $N(z)$
		by $N^*(z)\triangleq N(z) - 1$, and more generally,
		for the classes $\mathcal S^*$ and $\mathcal T^*$ and
		a parameter on the flowers of trees $\kappa$,
		in 2 and 3,	 replace $F(z)$ by $F^*(z)$\footnote{When working with parameters in the *-classes, the corresponding bivariate generating function is $F^*(z,u)\triangleq F(z,u)-1$ and so we have $F_u^*(z)=F_u(z)$.}.
	\end{enumerate}
\end{theorem}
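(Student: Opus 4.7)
All the generating functions in Proposition~\ref{prop:gftrees}(\ref{prop:gftrees:2}) and the cumulative counterparts in Proposition~\ref{prop:gftreesPetals}(\ref{prop:gftreesPetals:2}) are meromorphic as soon as $F(z)$ is, so the proof is a direct application of the meromorphic transfer theorem (Theorem~IV.9 of \cite{FlajoletSedgewick09}): if $f(z)$ has a single dominant singularity at $\alpha>0$ and locally behaves as $f(z)\sim c(1-z/\alpha)^{-r}$, then $[z^n]f(z)\sim c\binom{n+r-1}{r-1}\alpha^{-n}$. Everything else is the bookkeeping of identifying $\alpha$, $r$ and $c$ for each class and each parameter.

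\textbf{Locating $\alpha$ and reading off $c$.}
For $\mathcal R$ with $\mathcal N=\{k\}$ both flower types give $F(z)=1/(1-z^k)$, hence $R(z)=1/\bigl((1-z)(1-z^k)\bigr)$ has a double pole at $\alpha=1$. For $\mathcal R$ with $|\mathcal N|\ge 2$ and rooted-plane flowers one has $R(z)=1/\bigl((1-z)(1-N(z))\bigr)$; since $N$ is strictly increasing on $\mathbb{R}_{>0}$ with $N(0)=0$ and $N(1)\ge 2$, the equation $N(\alpha)=1$ admits a unique $\alpha\in(0,1)$, lying strictly inside $z=1$, so $\alpha$ is the dominant singularity and is simple because $N'(\alpha)>0$. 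The same monotonicity argument applied to $zF(z)$ gives a simple dominant root $\alpha$ of $1-zF(z)$ for $\mathcal S$ and $\mathcal T$. The constant $c$ is then obtained from the local expansions $1-N(z)=-N'(\alpha)(z-\alpha)+O\bigl((z-\alpha)^2\bigr)$ and $1-zF(z)=-\bigl(F(\alpha)+\alpha F'(\alpha)\bigr)(z-\alpha)+O\bigl((z-\alpha)^2\bigr)$, substituted into the relevant formula and rewritten using $z-\alpha=-\alpha(1-z/\alpha)$. Each $\Omega(z)$ has its denominator squared, which promotes the pole to order two and $\binom{n+1}{1}$ produces the stated extra factor of $n$. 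The expectations are the ratios $[z^n]\Omega(z)/[z^n]f(z)$; the exponential $\alpha^{-n}$ cancels, and the identities $N(\alpha)=1$ (for $\mathcal R$) and $\alpha F(\alpha)=1$ (for $\mathcal S,\mathcal T$) clean up the constants. The $*$-variants follow by replacing $F$ with $F^{*}=F-1$ throughout, which changes neither $\alpha$ nor the pole structure.

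\textbf{Main obstacle.}
The only non-routine hypothesis of Theorem~IV.9 is the existence of a meromorphic continuation of $f(z)$ past $|z|=\alpha$ together with the uniqueness of the dominant singularity on the circle $|z|=\alpha$. When $F$ is rational---which covers $\mathcal N$ finite and, for rooted-plane flowers, $\mathcal N=\mathbb{N}^{*}$---this is immediate, since $f(z)$ becomes a quotient of polynomials whose other zeros are explicit and bounded away from $\alpha$. In the broader meromorphic setting one must verify case by case that $F$ has no other singularity on the circle $|z|=\alpha$ and admits a meromorphic extension to a slightly larger disc; for every example registered in the appendix this reduces to inspecting the explicit factorisation of the denominator, but it is the one step that genuinely depends on the concrete choice of $\mathcal N$.
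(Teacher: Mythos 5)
Your proposal is correct and follows essentially the same route as the paper: identify the dominant pole of each rational (or meromorphic) generating function from Proposition~\ref{prop:gftrees}(\ref{prop:gftrees:2}) and Proposition~\ref{prop:gftreesPetals}(\ref{prop:gftreesPetals:2}), extract the leading constant from the local expansion of the denominator (the paper does this via l'H\^opital, you via the equivalent Taylor expansion $1-zF(z)=-\bigl(F(\alpha)+\alpha F'(\alpha)\bigr)(z-\alpha)+O\bigl((z-\alpha)^2\bigr)$), transfer via Theorem~IV.9 of \cite{FlajoletSedgewick09}, and obtain the expectations as ratios of coefficients. Your closing remark about verifying the single-dominant-singularity and continuation hypotheses is a fair observation, though the theorem as stated simply assumes them.
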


\begin{proof}
First we prove \ref{eq:Mero1} which is for the class $\mathcal R$. Suppose that $\mathcal N=\{k\}$. Then
\begin{align}
f(z) = \frac{1}{(1-z)(1-z^k)}= \frac{1}{(1-z)^2}\frac{1}{(1+\ldots+z^{k-1})}.
\end{align}
Thus $r=2$ and equation \eqref{eq:Rk} follows from Theorem IV.9 in \cite{FlajoletSedgewick09}
and the fact that
\begin{align}
\lim_{z\to 1}(1+\ldots+z^{k-1})=k.
\end{align}
Similarly,
\begin{align}
\Omega(z) = \left\{
\begin{array}{ll}
\frac{1}{(1-z)^3}\frac{z^{k}}{(1+\ldots+z^{k-1})^2}
& \textnormal{ for $\chi$,}\\
\frac{1}{(1-z)^3}\frac{kz^{k}}{(1+\ldots+z^{k-1})^2}
& \textnormal{ for $\xi$,}
\end{array}
\right.
\end{align}
and hence equation \eqref{eq:RkPara} follows from
Theorem IV.9 in \cite{FlajoletSedgewick09} and the fact that
\begin{align}
\lim_{z\to 1}(1+\ldots+z^{k-1})^2 = k^2.
\end{align}
Now suppose that $\mathcal N$ has at least two elements and flowers are rooted-plane. Then
$0 < \alpha < 1$ because it is the positive root of $1-N(z)$ (recall that  $f(z)=\frac{1}{(1-z)(1-N(z))}$, see equations \eqref{eq:flowersRP} and \eqref{eq:gftrees:2}, and now $\mathcal N$ has at least two elements!).
Furthermore, $\alpha$ is a simple pole of $f(z)$. Indeed,
\begin{align}
\lim\limits_{z\to \alpha}(z-\alpha)f(z) =\lim\limits_{z\to \alpha}\frac{(z-\alpha)}{(1-z)(1-N(z))}
\overset{\textnormal{l'H\^op.}}=\frac{-1}{(1-\alpha)N'(\alpha)} < 0
\end{align} 
because $N'(\alpha) > 0$ (recall that $N(z)$ is the generating function of a non-empty class
and $0<\alpha<1$), thus $r=1$.
Hence equation \eqref{eq:R2f} follows from
Theorem IV.9 in \cite{FlajoletSedgewick09}.
Furthermore, we observe that in this case $\Omega(z)$ also has
a pole at $z=\alpha$ and that its order is $2$ (see equations \eqref{eq:FuChiRP}
and \eqref{eq:FuXiRP} in Propositions \ref{prop:FuChi} and \ref{prop:FuXi},
resp., together with equation \eqref{eq:gftreesPetals:2} in Proposition \ref{prop:gftreesPetals}),
and a similar argument applies to deduce equation \eqref{eq:R2Para}.

To prove \ref{eq:Mero2}, now we have that $\alpha$ is the positive root of
$1-zF(z)$ (see equation \eqref{eq:gftrees:2} in Proposition \ref{prop:gftrees}).
Thus $0 < \alpha < 1$ because $\mathcal N \neq \varnothing$. Furthermore,
$\alpha$ is simple because
\begin{align}
\lim_{z\to \alpha}(z-\alpha)f(z) = \lim_{z\to \alpha}\frac{z-\alpha}{1-zF(z)}
\overset{\textnormal{l'H\^op.}}= \frac{-1}{F(\alpha)+\alpha F'(\alpha)}
\end{align}
and since $F(\alpha)+\alpha F'(\alpha)>0$, $r=1$.
Equation \eqref{eq:S2} now follows from Theorem IV.9 in \cite{FlajoletSedgewick09}.
For equation \eqref{eq:SP2} we argue in a similar way. First we write
\begin{align}
\Omega(z) =  \frac{1}{(z-\alpha)^2}zF_u(z)\left(\frac{z-\alpha}{1-zF(z)}\right)^2
\end{align}
and again by hypothesis, there exists the following non-zero limit:
\begin{align}
\lim_{z\to\alpha} zF_u(z)\left(\frac{z-\alpha}{1-zF(z)}\right)^2 
\overset{\textnormal{l'H\^op.}}=
 \frac{\alpha F_u(\alpha)}{\big( F(\alpha)+\alpha F'(\alpha) \big)^2} .
\end{align}
Similarly, equation \eqref{eq:SP2} again now follows from Theorem IV.9 in \cite{FlajoletSedgewick09}.

To prove \ref{eq:Mero3} and the *-versions, proceed similarly.
\end{proof}

\begin{theorem}[Asymptotics of $1$-trees with non-plane flowers on the leaves and bounded petal size]
\label{thm:meromorphicNonPlane}
	Assume both that $\mathcal K=\{1\}$ and that flowers are non-plane. Then,
	if $\mathcal N$ is finite, then
		\begin{align}
			[z^n]R(z) &\sim [z^n]R^*(z)\sim \left(\prod\limits_{m\in\mathcal N}\frac{1}{m}\right) \cdot \frac{n^{|\mathcal N|}}{|\mathcal N|!}.
		\end{align}
	Also,
	\begin{align}
		[z^n]\Omega (z) \sim \left\{
		\begin{array}{rl}
			\frac{1}{(|\mathcal N|+1)!}\left( \prod\limits_{m \in\mathcal N}\frac{1}{m} \right) \left( \sum\limits_{m \in\mathcal N} \frac{1}{m}\right)n^{|\mathcal N|+1} & \textnormal{ for } \chi, \\
			\frac{|\mathcal N|}{(|\mathcal N|+1)!}\left( \prod\limits_{m \in\mathcal N}\frac{1}{m} \right) n^{|\mathcal N|+1} & \textnormal{ for } \xi,
		\end{array}
		\right.
	\end{align}
	hence
	\begin{align}
		\mathbb E_{\mathcal R_n}(\kappa) \sim \left\{
		\begin{array}{rl}
			\frac{1}{(|\mathcal N|+1)} \left( \sum\limits_{m \in\mathcal N} \frac{1}{m}\right)n & \textnormal{ if } \kappa = \chi, \\
			\frac{|\mathcal N|}{(|\mathcal N|+1)} n & \textnormal{ if } \kappa = \xi.
		\end{array}
		\right.
	\end{align}	
\end{theorem}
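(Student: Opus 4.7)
The plan is to exploit the fact that, under the hypotheses $\mathcal{K}=\{1\}$ and flowers non-plane, the closed form from Proposition \ref{prop:gftrees} gives $R(z) = F(z)/(1-z)$ with $F(z) = P^{\mathcal{N}}(z) = \prod_{n \in \mathcal{N}} 1/(1-z^n)$, a rational function because $\mathcal{N}$ is finite. First I would factor each $1-z^n = (1-z)(1+z+\cdots+z^{n-1})$ to isolate the pole at $z=1$, obtaining
\begin{equation*}
R(z) = \frac{G(z)}{(1-z)^{|\mathcal{N}|+1}}, \qquad G(z) \triangleq \prod_{n\in\mathcal{N}}\frac{1}{1+z+\cdots+z^{n-1}},
\end{equation*}
so that $G$ is analytic and non-vanishing at $z=1$ with $G(1) = \prod_{m\in\mathcal{N}} 1/m$. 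A short check will show that the remaining poles of $R$ sit at roots of unity different from $1$; at each such point only the partition factor $P^{\mathcal{N}}(z)$ can contribute (not $1/(1-z)$), so the local pole order is at most $|\mathcal{N}|$, hence strictly subdominant relative to the order $|\mathcal{N}|+1$ at $z=1$. Applying the meromorphic transfer in Theorem IV.9 of \cite{FlajoletSedgewick09} then yields $[z^n]R(z) \sim G(1)\cdot n^{|\mathcal{N}|}/|\mathcal{N}|!$, which is the asserted formula.

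Next I would handle the two parameters via Proposition \ref{prop:gftreesPetals}, which in this case gives $\Omega(z) = F_u(z)/(1-z)$, together with Propositions \ref{prop:FuChi} and \ref{prop:FuXi} that express $F_u(z) = P^{\mathcal{N}}(z)\cdot \Sigma(z)$, where $\Sigma(z)$ equals $\sum_{n\in\mathcal{N}} z^n/(1-z^n)$ for $\kappa=\chi$ and $\sum_{n\in\mathcal{N}} nz^n/(1-z^n)$ for $\kappa=\xi$. Factoring the geometric denominators as above reveals a simple pole of $\Sigma(z)$ at $z=1$ whose residue-like coefficient evaluates to $\sum_{m\in\mathcal{N}} 1/m$ and $|\mathcal{N}|$, respectively. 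Multiplying this with the singular behavior of $P^{\mathcal{N}}(z)/(1-z)$ already computed shows that $\Omega(z)$ has a pole of order $|\mathcal{N}|+2$ at $z=1$ which still dominates all other unit-circle singularities, and a second application of Theorem IV.9 yields the claimed asymptotics for $[z^n]\Omega(z)$. The expectations $\mathbb{E}_{\mathcal{R}_n}(\kappa)$ follow by taking the ratio $[z^n]\Omega(z)/[z^n]R(z)$ and collecting constants.

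For the starred version, observe that $R^*(z) = R(z) - 1/(1-z)$ subtracts only a simple pole at $z=1$, which is of order $1 < |\mathcal{N}|+1$, so $[z^n]R^*(z) \sim [z^n]R(z)$ automatically. Likewise $F_u^*(z) = F_u(z)$, so the parameter analysis of $\mathcal{R}^*$ is identical to that of $\mathcal{R}$.

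I expect the main obstacle to be purely bookkeeping: verifying rigorously that every auxiliary root-of-unity singularity is strictly subdominant (which requires noting that at any $\omega\neq 1$ only those factors $1-z^n$ with $\omega^n=1$ contribute, so the pole order cannot reach $|\mathcal{N}|+1$), and reading off the correct leading constant at $z=1$ from the product of singular expansions. Once these verifications are in place, the stated formulas fall out by routine comparison.
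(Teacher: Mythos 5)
Your proposal is correct and follows essentially the same route as the paper: factor $1-z^m=(1-z)(1+\cdots+z^{m-1})$ to exhibit the pole of order $|\mathcal N|+1$ (resp.\ $|\mathcal N|+2$ for $\Omega$) at $z=1$, evaluate the analytic cofactor there, and transfer via Theorem IV.9 of \cite{FlajoletSedgewick09}; your explicit check that the other roots of unity give strictly smaller pole order is a welcome detail the paper leaves implicit. The only cosmetic difference is in $\mathcal R^*$, where you subtract the lower-order term $1/(1-z)$ while the paper evaluates the limit of $\bigl(1-\prod_{m}(1-z^m)\bigr)/\prod_{m}(1+\cdots+z^{m-1})$ at $z=1$ directly — both give the same constant.
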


\begin{proof}
We write
\begin{align}
R(z) &= \frac{F(z)}{1-z}=\frac{1}{1-z}\prod_{m\in\mathcal N}\frac{1}{1-z^m}\\ &=
\frac{1}{(1-z)^{|\mathcal N|+1}}\prod_{m\in\mathcal N}\frac{1}{1+\ldots + z^{m-1}}
\end{align}
and then use Theorem IV.9  in \cite{FlajoletSedgewick09} together with the fact that
\begin{align}
\label{eq:prodLimit}
\lim\limits_{z\to 1}\prod\limits_{m\in\mathcal N} \frac{1}{1+\ldots + z^{m-1}} = \prod\limits_{m\in\mathcal N}\frac{1}{m}.
\end{align}
For $\mathcal R^*$ the situation is very similar, first we write
\begin{align}
R^*(z) & = \frac{F(z)-1}{1-z}=\frac{1}{1-z}\left(\prod_{m\in\mathcal N}\frac{1}{1-z^m} -1\right) \\
&= \frac{1}{(1-z)^{|\mathcal N|+1}} \cdot \frac{1-\prod_{m\in\mathcal N}(1-z^m)}{\prod_{m\in\mathcal N}(1+\ldots + z^{m-1})}
\end{align}
and then use Theorem IV.9  in \cite{FlajoletSedgewick09} together with the fact that
\begin{align}
\lim\limits_{z\to 1}\frac{1-\prod_{m\in\mathcal N}(1-z^m)}{\prod_{m\in\mathcal N}(1+\ldots + z^{m-1})} = \prod\limits_{m\in\mathcal N}\frac{1}{m}.
\end{align}

Next, we have from equations \eqref{eq:FuChiNP} and \eqref{eq:FuXiNP} that
\begin{align}
	\Omega(z) = \frac{F_u(z)}{1-z} & =
	\left\{
	\begin{array}{ll}
		\frac{1}{1-z}{P^{\mathcal N}(z)}\sum\limits_{n\in\mathcal N} \frac{z^n}{1-z^n}& \textnormal{ for } \chi \\
		\frac{1}{1-z}{P^{\mathcal N}(z)}\sum\limits_{n\in\mathcal N} \frac{nz^n}{1-z^n}& \textnormal{ for } \xi
	\end{array}
	\right. \\
	& = \left\{
	\begin{array}{ll}
		\frac{1}{(1-z)^{|\mathcal N|+2}}\prod\limits_{n\in \mathcal N}\frac{1}{1+\ldots + z^{n-1}}\sum\limits_{n \in \mathcal N}\frac{z^n}{1+\ldots +z^{n-1}} & \textnormal{ for } \chi \\
		  \frac{1}{(1-z)^{|\mathcal N|+2}}\prod\limits_{n\in \mathcal N}\frac{1}{1+\ldots + z^{n-1}}\sum\limits_{n \in \mathcal N}\frac{nz^n}{1+\ldots +z^{n-1}} & \textnormal{ for } \xi.
	\end{array}
	\right.
\end{align}
We already know equation \eqref{eq:prodLimit} and we also have
\begin{align}
	\lim_{z\to 1} \sum\limits_{n\in \mathcal N} \frac{z^n}{1+\ldots + z^{n-1}} = \sum\limits_{n\in \mathcal N} \frac{1}{n}
\end{align}
and
\begin{align}
	\lim_{z\to 1} \sum\limits_{n\in \mathcal N} \frac{nz^n}{1+\ldots + z^{n-1}} = |\mathcal N|.
\end{align}
\end{proof}
\section{Final remarks}
\label{sec:finalRemarks}

When $\mathcal K = \{1\}$, flowers are non-plane, $\mathcal N$ is infinite (e.g. when $\mathcal N =\mathbb N^*$), for the class $\mathcal R$ of $1$-trees
with flowers on the leaves (see \ref{prop:gftrees:2} in Proposition \ref{prop:gftrees}), its $*$-version and their parameters on the flowers of the $1$-trees (see \ref{prop:gftreesPetals:2} in Proposition \ref{prop:gftreesPetals}), the asymptotic analysis of the growth of the coefficients can be also carried out through standard techniques that involve Mellin transformations, residue analysis, and saddle point method. The cases in Table \eqref{eq:Tables:1} that correspond to these situations are all already registered in OEIS together with their asymptotic growth (they all are colored gray $\begin{array}{l}\cellcolor{gray!25} \ \ \end{array}$).
Thus we will not address examples of this, instead we refer the reader
to \cite{AhmadiGomezWard202X}, not only
as another global approach to classify families of combinatorial classes with an asymptotic analysis focused on these techniques, but also to point
out that all what we have seen here can be extrapolated much further,
e.g. to trees with labelled colorful flowers!

\begin{appendices}



\section{}\label{sec:appendix}%

We searched in OEIS the coefficients of $f(z)$ and $\Omega(z)$ for
the three classes $\mathcal R$, $\mathcal S$, $\mathcal T$
and their $*$-versions, for both parameters $\chi$ and $\xi$,
for both non-plane and rooted-plane flowers on $\mathcal K$-trees,
with $\mathcal K$ along the four cases we have considered
($\mathbb N^*$, $\{2\}$, $\{1,2\}$ and $\{1\}$),
for four specific cases of $\mathcal N$
($\mathbb N^*$, $\{1\}$, $\{2\}$ and $\{1,2\}$).
In this Appendix we present our findings grouped in four sections,
one for each case of $\mathcal K$. In each case, we present
a table that summarizes what we found by the time or writing:
\begin{itemize}
\item
We show the OEIS' identifier code
whenever the sequence matches one of our classes of trees with flowers,
and it is colored gray
$\begin{array}{l}\cellcolor{gray!25} \ \ \end{array}$ 
if one can find the asymptotic growth of the coefficients in the description of the sequence in OEIS.
\item
$\color{blue}\circ\color{black}$ means
that the asymptotic equivalence has no explicit reference registered
in OEIS and these cases are colored green
$\begin{array}{l}\cellcolor{green!25} \ \ \end{array}$, and in order to
present some examples of the previous theorems, we will work out all these cases.
\item
$\substack{..\\ \smile}$ means that the sequence would be new to
OEIS because we could not find a registry that matched, and
these cases are colored light green
$\begin{array}{l}\cellcolor{green!5} \ \ \end{array}$.
\item
$\Leftarrow$ means that the sequence is equal to the sequence
that is to its left.
\item
$\dagger$ means as before, i.e. that the sequence is ``essentially'' the same
(e.g. except for the first few terms, by a shift, by an integer multiple, etc.).
\item
$\conj$ means that we found an open conjecture
in OEIS' description of the sequence.
\end{itemize}

\subsection{Case $\mathcal K = \mathbb N^*$.}
\label{sec:appendix:1}

Table A.1 in equation \eqref{eq:Tables:N} summarizes our findings for the case $\mathcal K = \mathbb N^*$. There were three cases among those that we found in OEIS that have no asymptotic description. Let us work these out:

\begin{example} [$\mathcal K = \mathbb N^*$: \oeis{A254314}$^{\dagger, \conj}$, \oeis{A025266}$^\dagger$, \oeis{A026571}] Suppose that $\mathcal K= \mathbb N^*$.
If $\mathcal N = \mathbb N^*$ and flowers are rooted-plane, then $F(z) = (1-z)/(1-2z)$ and in this situation we have the following two cases:
\begin{itemize}
\item \oeis{A254314}$^{\dagger, \conj}$. 
For the class $\mathcal R$ we have
\begin{align}
\label{eq:p:A254314}
p(z) = \frac{(z-1)(z^3-11z^2+7z-1)}{(1-2z)^2}.
\end{align}
This rational function has a pole of order $2$ at $z=1/2=0.5$ and also four distinct positive roots that can be computed explicitly, in particular the dominant singularity of $f(z)$ is
\begin{align}
\label{eq:zetaA254314}
\zeta & =
\frac{1}{3} \left(-\sqrt[3]{-998+6 i \sqrt{111}}-\frac{100}{\sqrt[3]{-998+6 i \sqrt{111}}}+11\right)
\approx 0.2123\ldots.
\end{align}
\footnotesize
\begin{align}
	\label{eq:Tables:N}
	\begin{array}{|c|}
	\hline \\
	\begin{array}{|| c | c ||| c | c || c | c || c | c ||}
		\hline
		\textnormal{Class} & \mathcal N & \textnormal{rooted-plane} & \substack{\textnormal{non-}\\ \textnormal{plane}}
			& \substack{\#\textnormal{ petals}\\ \textnormal{rooted-plane}} 
			& \substack{\#\textnormal{ petals}\\ \textnormal{non-plane}}
			& \substack{\#\textnormal{ edges}\\ \textnormal{in petals}\\ \textnormal{rooted-plane}} 
			& \substack{\#\textnormal{ edges} \\ \textnormal{in petals}\\ \textnormal{non-plane}} \\
		\hline \hline \hline
		\mathcal R & \mathbb N^* & \cellcolor{green!25} \oeis{A254314}^{\dagger,\conj,\noa} & \na & \na & \na & \na & \na \\
		\hline
		\mathcal S & \mathbb N^* & \cellcolor{gray!25} \oeis{A059278} & \na & \na & \na & \na & \na \\
		\hline
		\mathcal T & \mathbb N^* & \cellcolor{gray!25} \oeis{A059279}^{\conj} & \na & \na & \na & \na & \na \\
		\hline
		\mathcal R^* & \mathbb N^* & \na & \na & \na & \na & \na & \na \\
		\hline
		\mathcal S^* & \mathbb N^* & \cellcolor{green!25} \oeis{A025266}^{\dagger,\noa} & \na & \na & \na & \na & \na \\
		\hline
		\mathcal T^* & \mathbb N^* & \cellcolor{gray!25} \oeis{A086622}^{\dagger, \conj} & \na & \na & \na & \na & \na \\
		\hline \hline
		\mathcal R & \{1\} & \na & \Leftarrow & \na & \Leftarrow & \Leftarrow & \Leftarrow\\
		\hline
		\mathcal S & \{1\} & \cellcolor{gray!25} \oeis{A002212} & \Leftarrow & \cellcolor{gray!25} \oeis{A026376}^{\dagger, \conj} & \Leftarrow & \Leftarrow & \Leftarrow \\
		\hline
		\mathcal T & \{1\} & \cellcolor{gray!25} \substack{\oeis{A007317} \\ \oeis{A181768}^{\dagger}}& \Leftarrow & \na & \Leftarrow & \Leftarrow & \Leftarrow \\
		\hline
		\mathcal R^* & \{1\} & \na & \Leftarrow & \na & \Leftarrow & \Leftarrow & \Leftarrow \\
		\hline
		\mathcal S^* & \{1\} & \cellcolor{gray!25} \oeis{A090345}^{\conj} & \Leftarrow & \cellcolor{green!25} \oeis{A026571}^{\conj, \noa} & \Leftarrow & \Leftarrow & \Leftarrow \\
		\hline
		\mathcal T^* & \{1\} & \cellcolor{gray!25} \oeis{A090344}^{\dagger} & \Leftarrow & \na & \Leftarrow & \Leftarrow & \Leftarrow \\
		\hline \hline
		\mathcal R & \{2\} & \na & \Leftarrow & \na & \Leftarrow & \na & \Leftarrow \\
		\hline
		\mathcal S & \{2\} & \cellcolor{gray!25} \oeis{A085139} & \Leftarrow & \na & \Leftarrow & \na & \Leftarrow \\
		\hline
		\mathcal T & \{2\} & \cellcolor{gray!25} \oeis{A105864} ^{\conj}& \Leftarrow & \na & \Leftarrow & \na & \Leftarrow \\
		\hline
		\mathcal R^* & \{2\} & \na & \Leftarrow & \na & \Leftarrow & \na & \Leftarrow \\
		\hline
		\mathcal S^* & \{2\} & \na & \Leftarrow & \na & \Leftarrow & \na & \Leftarrow \\
		\hline
		\mathcal T^* & \{2\} & \na & \Leftarrow & \na & \Leftarrow & \na & \Leftarrow \\
		\hline \hline
		\mathcal R & \{1,2\} & \na & \na & \na & \na &  \na & \na \\
		\hline
		\mathcal S & \{1,2\} & \cellcolor{gray!25} \oeis{A084782} & \na & \na & \na &  \na & \na \\
		\hline
		\mathcal T & \{1,2\} & \cellcolor{gray!25} \oeis{A184018}^{\conj} & \na & \na & \na &  \na & \na \\
		\hline
		\mathcal R^* & \{1,2\} & \na & \na & \na & \na &  \na & \na \\
		\hline
		\mathcal S^* & \{1,2\} & \na & \na & \na & \na &  \na & \na \\
		\hline
		\mathcal T^* & \{1,2\} & \na & \na & \na & \na &  \na & \na \\
\hline
	\end{array} \\ 
	\\
	\textnormal{\normalsize Table A.1 for the case $\mathcal K = \mathbb N^*$}\\ \\
	\hline
	\end{array}
\end{align}
\normalsize
Then
\begin{align}
\label{eq:r:growthA254314}
[z^n]R(z) \sim \frac{\sqrt{5-32\zeta+46\zeta^2}}{2\sqrt{\zeta (1-2\zeta)^3  \pi n^3}}\cdot \zeta^{-n}.
\end{align}
\item \oeis{A025266}$^\dagger$.
For the class $\mathcal S^*$ again we have
\begin{align}
p(z) = \frac{4 z^2+ 2 z - 1}{2z - 1}
\end{align}
and thus we conclude that
\begin{align}
[z^n]S^*(z) \sim \frac{\sqrt{5+3\sqrt{5}}}{ \sqrt{(1+\sqrt{5})\pi n^3} }
\cdot \left(\frac{4}{\sqrt{5}-1}\right)^n .
\end{align}
\end{itemize}
If $\mathcal N = \{1\}$, then $F(z) = 1/(1-z)$ and in this situation we have one case:
\begin{itemize}
\item \oeis{A026571}. 
For the class $\mathcal S^*$ we have
\begin{align}
p(z) = \frac{1-z-4z^2}{1-z}
\end{align}
and hence for $\kappa$ either $\chi$ or $\xi$ (they coincide in this case), we have
\begin{align}
[z^n]\Omega(z) \sim \frac{16\sqrt{9-\sqrt{17}}}{ \sqrt{\pi(\sqrt{17}-1)^5\sqrt{17} n} }
\cdot \left(\frac{8}{\sqrt{17}-1}\right)^n ,
\end{align}
and since
\begin{align}
[z^n]S^*(z) \sim \frac{\sqrt{\sqrt{17}(\sqrt{17}-1)}}{ \sqrt{\pi(9-\sqrt{17}) n^3} }
\cdot \left(\frac{8}{\sqrt{17}-1}\right)^n ,
\end{align}
we conclude that
\begin{align}
\mathbb E_{\mathcal S^*}(\kappa) \sim \frac{16(9-\sqrt{17})}{\sqrt{17}(\sqrt{17}-1)^3}\cdot n
\end{align}
which is about $\% 62.13$ of $n$.
\end{itemize}
\end{example}

\subsection{Case $\mathcal K = \{2\}$.}
\label{sec:appendix:2}
Table A.2 in equation \eqref{eq:Tables:2} summarizes our findings for the case $\mathcal K = \{ 2 \}$. There were four cases among those that we found in OEIS that have no asymptotic description. Let us work these out:
\footnotesize
\begin{align}
	\label{eq:Tables:2}
	\begin{array}{|c|}
	\hline \\
	\begin{array}{|| c | c ||| c | c || c | c || c | c ||}
		\hline
		\textnormal{Class} & \mathcal N & \textnormal{roted plane} & \substack{\textnormal{non-}\\ \textnormal{plane}}
			& \substack{\#\textnormal{ petals}\\ \textnormal{plane}} 
			& \substack{\#\textnormal{ petals}\\ \textnormal{non-plane}}
			& \substack{\#\textnormal{ edges}\\ \textnormal{in petals}\\ \textnormal{plane}} 
			& \substack{\#\textnormal{ edges} \\ \textnormal{in petals}\\ \textnormal{non-plane}} \\
		\hline \hline \hline
		\mathcal R & \mathbb N^* & \cellcolor{green!25} \oeis{A173992}^{\conj,\noa} & \na & \na & \na & \na & \na \\
		\hline
		\mathcal S & \mathbb N^* & \na & \na & \na & \na & \na & \na \\
		\hline
		\mathcal T & \mathbb N^* & \cellcolor{gray!25} \oeis{A135052}^{\conj} & \na & \na & \na & \na & \na \\
		\hline
		\mathcal R^* & \mathbb N^* & \na & \na & \na & \na & \na & \na \\
		\hline
		\mathcal S^* & \mathbb N^* & \na & \na & \na & \na & \na & \na \\
		\hline
		\mathcal T^* & \mathbb N^* & \cellcolor{green!25} \oeis{A025276}^{\dagger,\noa} & \na & \na & \na & \na & \na \\
		\hline \hline
		\mathcal R & \{1\} & \cellcolor{gray!25} \oeis{A090344} & \Leftarrow & \na & \Leftarrow & \Leftarrow & \Leftarrow\\
		\hline
		\mathcal S & \{1\} & \cellcolor{gray!25} \oeis{A002026}^{\dagger} & \Leftarrow & \cellcolor{gray!25} 2\cdot\oeis{A002026}^{\dagger,\noa} & \Leftarrow & \Leftarrow & \Leftarrow \\
		\hline
		\mathcal T & \{1\} & \cellcolor{gray!25} \oeis{A001006} & \Leftarrow & \cellcolor{gray!25} \oeis{A005717} & \Leftarrow & \Leftarrow & \Leftarrow \\
		\hline
		\mathcal R^* & \{1\} & \cellcolor{gray!25} \oeis{A216604}^{\dagger} & \Leftarrow & \na & \Leftarrow & \Leftarrow & \Leftarrow \\
		\hline
		\mathcal S^* & \{1\} & \na & \Leftarrow & \na & \Leftarrow & \Leftarrow & \Leftarrow \\
		\hline
		\mathcal T^* & \{1\} & \cellcolor{gray!25} \oeis{A023426}^{\dagger,\conj} & \Leftarrow & \na & \Leftarrow & \Leftarrow & \Leftarrow \\
		\hline \hline
		\mathcal R & \{2\} & \cellcolor{gray!25} \oeis{A007317}^{\dagger} & \Leftarrow & \na & \Leftarrow & \na & \Leftarrow \\
		\hline
		\mathcal S & \{2\} & \cellcolor{gray!25} \oeis{A006319}^{\dagger} & \Leftarrow & \cellcolor{gray!25} 2\cdot\oeis{A026002}^{\dagger,\conj}  & \Leftarrow & \na & \Leftarrow \\
		\hline
		\mathcal T & \{2\} & \cellcolor{gray!25} \oeis{A006318}^{\dagger,\conj} & \Leftarrow & \cellcolor{gray!25} \oeis{A002002}^{\dagger} & \Leftarrow & \cellcolor{gray!25} \oeis{A110170}^{\dagger} & \Leftarrow \\
		\hline
		\mathcal R^* & \{2\} & \cellcolor{gray!25} \oeis{A090344}^{\dagger} & \Leftarrow & \na & \Leftarrow & \na & \Leftarrow \\
		\hline
		\mathcal S^* & \{2\} & \cellcolor{green!25} \oeis{A052702}^{\dagger,\noa} & \Leftarrow & \na & \Leftarrow & \na & \Leftarrow \\
		\hline
		\mathcal T^* & \{2\} & \cellcolor{green!25} \oeis{A023431}^{\dagger,\conj,\noa} & \Leftarrow & \na & \Leftarrow & \na & \Leftarrow \\
		\hline \hline
		\mathcal R & \{1,2\} & \cellcolor{gray!25} \oeis{A186334}^{\conj} & \na & \na & \na & \na & \na \\
		\hline
		\mathcal S & \{1,2\} & \na & \na & \na & \na & \na & \na \\
		\hline
		\mathcal T & \{1,2\} & \cellcolor{gray!25} \oeis{A128720} & \na & \na & \na & \na & \na \\
		\hline
		\mathcal R^* & \{1,2\} & \na & \na & \na & \na & \na & \na \\
		\hline
		\mathcal S^* & \{1,2\} & \na & \na & \na & \na & \na & \na \\
		\hline
		\mathcal T^* & \{1,2\} & \na & \na & \na & \na & \na & \na \\
\hline 
	\end{array} \\ 
	\\
	\textnormal{\normalsize Table A.2 for the case $\mathcal K = \{ 2 \}$} \\ \\
	\hline
	\end{array}
\end{align}
\normalsize

\begin{example} [$\mathcal K = \{2\}$: \oeis{A173992}, \oeis{A025276}$^\dagger$, \oeis{A052702}$^\dagger$, \oeis{A023431}$^\dagger$] Suppose that $\mathcal K= \{2\}$ (see Table in equation \eqref{eq:Tables:2}).
If $\mathcal N = \mathbb N^*$ and flowers are rooted-plane, then $F(z) = (1-z)/(1-2z)$ and hence we have the following cases:
\begin{itemize}
\item \oeis{A173992}. 
For the class $\mathcal R$, we have
\begin{align}
p(z) = \frac{4 z^3-4 z^2-2 z+1}{1-2 z}
\end{align}
and hence the dominant singularity is
\begin{align}
\zeta = \frac{1}{12} \left(\left(1+i \sqrt{3}\right) \sqrt[3]{1+3 i \sqrt{111}}+\frac{80}{\left(\sqrt{3}+i\right)^2 \sqrt[3]{1+3 i \sqrt{111}}}+4\right)\approx 0.3444\ldots .
\end{align}
Then
\begin{align}
[z^n]R(z) \sim \frac{\sqrt{2-5\zeta + 4\zeta^2}}{2\zeta(1-2\zeta)\sqrt{\pi n^3}} \cdot \zeta ^{-n}.
\end{align}
\item \oeis{A025276}$^\dagger$.
For the class $\mathcal T^*$, we have
\begin{align}
p(z)= \frac{(1-2z+2z^2)(1-2z-2z^2)}{(1-2z)^2}
\end{align}
and hence
\begin{align}
	[z^n]T^*(z) \sim \sqrt{\frac{2\sqrt{3}}{(3\sqrt{3}-5)\pi n^3}} \left(\frac{2}{\sqrt{3}-1}\right)^n .
\end{align}
\end{itemize}
If $\mathcal N = \{2\}$ and flowers are rooted-plane, then $F(z) = 1/(1-z^2)$ and we have the following cases:
\begin{itemize}
\item \oeis{A052702}$^\dagger$.
For the class $\mathcal S^*$, we have
\begin{align}
p(z)= \frac{(1-z^2+2z^3)(1-z^2-2z^2)}{(1-z)^2(1+z)^2}
\end{align}
and hence there are two dominant singularities, namely
\begin{align}
\zeta &= \frac{1}{6} \left(\sqrt[3]{6 \sqrt{78}+53}+\sqrt[3]{53-6 \sqrt{78}}-1\right) \approx 0.657298106\ldots 
\end{align}
and $-\zeta$. Thus
\begin{align}
	[z^{2n}]S^*(z) \sim
	 \sqrt{\frac{(3-\zeta^2)(1-\zeta^2)}{4\zeta^6 \pi n^3}} \zeta^{-2n} 
\quad \textnormal{ and } \quad [z^{2n+1}]S^*(z)=0.
\end{align}
\item\oeis{A023431}$^\dagger$.
For the class $\mathcal T^*$ the situation is very similar to the previous case for $\mathcal S^*$ (\oeis{A052702}$^\dagger$), we get
\begin{align}
	[z^{2n}]T^*(z) \sim
	\sqrt{\frac{(3-\zeta^2)(1-\zeta^2)}{4\zeta^2 \pi n^3}} \cdot \zeta^{-2n} 
	\quad \textnormal{ and } \quad
	[z^{2n+1}]T^*(z)=0.
\end{align}
\end{itemize}
\end{example}

\subsection{Case $\mathcal K = \{1,2\}$.}
\label{sec:appendix:3}

Table A.3 in equation \eqref{eq:Tables:12} summarizes our findings for the case $\mathcal K = \{ 1, 2 \}$. There were three cases among those that we found in OEIS that have no asymptotic description. Let us work these out:

\footnotesize
\begin{align}
	\label{eq:Tables:12}
	\begin{array}{|c|}
	\hline
	\\
	\begin{array}{|| c | c ||| c | c || c | c || c | c ||}
		\hline
		\textnormal{Class} & \mathcal N & \textnormal{rooted-plane} & \substack{\textnormal{non-}\\ \textnormal{plane}}
			& \substack{\#\textnormal{ petals}\\ \textnormal{plane}} 
			& \substack{\#\textnormal{ petals}\\ \textnormal{non-plane}}
			& \substack{\#\textnormal{ edges}\\ \textnormal{in petals}\\ \textnormal{plane}} 
			& \substack{\#\textnormal{ edges} \\ \textnormal{in petals}\\ \textnormal{non-plane}} \\
		\hline \hline \hline
		\mathcal R & \mathbb N^* & \na & \na & \na & \na & \na & \na \\
		\hline
		\mathcal S & \mathbb N^* & \na & \na & \na & \na & \na & \na \\
		\hline
		\mathcal T & \mathbb N^* & \na & \na & \na & \na & \na & \na \\
		\hline
		\mathcal R^* & \mathbb N^* & \na & \na & \na & \na & \na & \na \\
		\hline
		\mathcal S^* & \mathbb N^* & \cellcolor{green!25} \oeis{A026134}^{\dagger,\noa} & \na & \na & \na & \na & \na \\
		\hline
		\mathcal T^* & \mathbb N^* & \cellcolor{gray!25} \oeis{A002026} & \na & \cellcolor{green!25} \oeis{A014531}^{\dagger,\noa} & \na & \cellcolor{gray!25} \oeis{A097894}^{\dagger,\conj} & \na \\
		\hline \hline
		\mathcal R & \{1\} & \cellcolor{gray!25} \oeis{A105633} & \Leftarrow & \na & \Leftarrow & \Leftarrow & \Leftarrow\\
		\hline
		\mathcal S & \{1\} & \cellcolor{gray!25} \oeis{A071724} & \Leftarrow & \cellcolor{gray!25} \oeis{A220101}^{\dagger} & \Leftarrow & \Leftarrow & \Leftarrow \\
		\hline
		\mathcal T & \{1\} & \cellcolor{gray!25} \oeis{A000108}^{\dagger} & \Leftarrow & \cellcolor{gray!25} \oeis{A001791} & \Leftarrow & \Leftarrow & \Leftarrow \\
		\hline
		\mathcal R^* & \{1\} & \na & \Leftarrow & \na & \Leftarrow & \Leftarrow & \Leftarrow \\
		\hline
		\mathcal S^* & \{1\} & \na & \Leftarrow & \na & \Leftarrow & \Leftarrow & \Leftarrow \\
		\hline
		\mathcal T^* & \{1\} & \cellcolor{gray!25} \oeis{A026418}^{\dagger,\conj} & \Leftarrow & \na & \Leftarrow & \Leftarrow & \Leftarrow \\
		\hline \hline
		\mathcal R & \{2\} & \na & \Leftarrow & \na & \Leftarrow & \na & \Leftarrow \\
		\hline
		\mathcal S & \{2\} & \na & \Leftarrow & \na & \Leftarrow & \na & \Leftarrow \\
		\hline
		\mathcal T & \{2\} & \cellcolor{gray!25} \oeis{A128720} & \Leftarrow & \cellcolor{gray!25} \oeis{A106053}^{\noa} & \Leftarrow & \cellcolor{green!25} 2\cdot\oeis{A106053}^{\noa}& \Leftarrow \\
		\hline
		\mathcal R^* & \{2\} & \na & \Leftarrow & \na & \Leftarrow & \na & \Leftarrow \\
		\hline
		\mathcal S^* & \{2\} & \na  & \Leftarrow & \na & \Leftarrow & \na & \Leftarrow \\
		\hline
		\mathcal T^* & \{2\} & \na  & \Leftarrow & \na & \Leftarrow & \na & \Leftarrow \\
		\hline \hline
		\mathcal R & \{1,2\} & \na & \na & \na & \na & \na & \na \\
		\hline
		\mathcal S & \{1,2\} & \na & \na & \na & \na & \na & \na \\
		\hline
		\mathcal T & \{1,2\} & \cellcolor{gray!25} \oeis{A085139}^{\dagger} & \na & \na & \na & \na & \na \\
		\hline
		\mathcal R^* & \{1,2\} & \na & \na & \na & \na & \na & \na \\
		\hline
		\mathcal S^* & \{1,2\} & \na & \na & \na & \na & \na & \na \\
		\hline
		\mathcal T^* & \{1,2\} & \na & \na & \na & \na & \na & \na \\
\hline 
	\end{array} \\ 
	\\
	\textnormal{\normalsize Table A.3 for the case $\mathcal K = \{ 1,2 \}$} \\  \\
	\hline
	\end{array}
\end{align}
\normalsize

\begin{example} [$\mathcal K = \{1,2\}$: \oeis{A026134}$^\dagger$, \oeis{A014531}$^\dagger$, \oeis{A106053}] Suppose that $\mathcal K= \{1,2\}$.
If $\mathcal N = \mathbb N^*$ and flowers are rooted-plane, then $F(z) = (1-z)/(1-2z)$ and hence we have the following cases:
\begin{itemize}
\item \oeis{A026134}$^\dagger$.
For the class $\mathcal S^*$, we have
\begin{align}
p(z)= \frac{(1-z)^2(1+z)(1-3z)}{(1-2z)^2}
\end{align}
and hence
\begin{align}
	[z^n]S^*(z) \sim \sqrt{\frac{27}{\pi n^3}} 3^n .
\end{align}
\item \oeis{A014531}$^\dagger$.
For the class $\mathcal T^*$ and $\chi\colon\mathcal T^* \to \mathbb N$
the number of petals, the situation is very similar to the previous case for $\mathcal S^*$, we get
\begin{align}
	[z^n]\Omega(z) \sim \sqrt{\frac{27}{4 \pi n}} 3^n 
\end{align}
and since
\begin{align}
	[z^n]T^*(z) \sim \sqrt{\frac{27}{ \pi n^3}} 3^n ,
\end{align}
we conclude that
\begin{align}
	\mathbb E_{\mathcal T_n^*}(\xi) \sim \frac{1}{2}n.
\end{align}
\end{itemize}
If $\mathcal N = \{2\}$, then $F(z)=1/(1-z^2)$ and hence we have the following case:
\begin{itemize}
\item $2\times$\oeis{A106053}.
For the class $\mathcal T$ and $\xi\colon\mathcal T^* \to \mathbb N$
the number of edges in petals, we have
\begin{align}
	p(z) = \frac{(1+z-z^2)(1-3z-z^2)}{(1-z)^2(1+z)^2}
\end{align}
and then
\begin{align}
	[z^n]\Omega(z) \sim \frac{2\sqrt{2}}{(3+\sqrt{13})\sqrt{(11\sqrt{13}-39)\pi n}} \left(\frac{2}{\sqrt{13}-3}\right)^n,
\end{align}
and analysis as above in particular yield that $\mathbb E_{\mathcal T_n}(\xi)$ is about $16.79\%$ of $n$.
\end{itemize}
\end{example}

\subsection{Case $\mathcal K = \{ 1 \}$.}
\label{sec:appendix:4}
\footnotesize
\begin{align}
\footnotesize
	\label{eq:Tables:1}
	\begin{array}{|c|}
	\hline \\
	\begin{array}{|| c | c ||| c | c || c | c || c | c ||}
		\hline
		\textnormal{Class} & \mathcal N & \textnormal{rooted-plane} & \substack{\textnormal{non-plane}}
			& \substack{\#\textnormal{ petals}\\ \textnormal{rooted-plane}} 
			& \substack{\#\textnormal{ petals}\\ \textnormal{non-plane}}
			& \substack{\#\textnormal{ edges}\\ \textnormal{in petals}\\ \textnormal{rooted-plane}} 
			& \substack{\#\textnormal{ edges} \\ \textnormal{in petals}\\ \textnormal{non-plane}} \\
		\hline \hline \hline
		\mathcal R & \mathbb N^* & \cellcolor{gray!25} \oeis{A000079} & \cellcolor{gray!25} \oeis{A000070} & \cellcolor{gray!25} \oeis{A001787} & \cellcolor{gray!25} \oeis{A284870} & \cellcolor{gray!25} \oeis{A000337} & \cellcolor{gray!25} \oeis{A182738}^{\dagger} \\
		\hline
		\mathcal S & \mathbb N^* & \cellcolor{gray!25} \oeis{A001519} &\cellcolor{gray!25} \oeis{A067687} & \cellcolor{gray!25} \oeis{A001870}^{\dagger} & \na & \cellcolor{gray!25} \oeis{A001871}^{\dagger} & \na \\
		\hline
		\mathcal T & \mathbb N^* & \cellcolor{gray!25} \oeis{A001519}^{\dagger} & \cellcolor{gray!25} \oeis{A067687}^{\dagger} & \cellcolor{gray!25} \oeis{A001870}^{\dagger} & \na & \cellcolor{gray!25} \oeis{A001871}^{\dagger} & \na\\
		\hline
		\mathcal R^* & \mathbb N^* & \cellcolor{gray!25} \oeis{A000225} & \cellcolor{gray!25} \oeis{A026905}^{\dagger} &\cellcolor{gray!25} \oeis{A001787} & \cellcolor{gray!25} \oeis{A284870} & \cellcolor{gray!25} \oeis{A000337} & \cellcolor{gray!25} \oeis{A182738}^{\dagger} \\
		\hline
		\mathcal S^* & \mathbb N^* & \cellcolor{gray!25} \oeis{A000129}^{\dagger} & \na & \cellcolor{gray!25} \oeis{A026937}^{\dagger} & \na & \cellcolor{gray!25} \oeis{A006645} & \na \\
		\hline
		\mathcal T^* & \mathbb N^* & \cellcolor{gray!25} \oeis{A000129}^{\dagger} & \na & \cellcolor{gray!25} \oeis{A026937}^{\dagger} & \na & \cellcolor{gray!25} \oeis{A006645}^{\dagger} & \na \\
		\hline \hline
		\mathcal R & \{1\} & \cellcolor{gray!25} \oeis{A000027} & \Leftarrow & \cellcolor{gray!25} \oeis{A000217}  & \Leftarrow & \Leftarrow & \Leftarrow\\
		\hline
		\mathcal S & \{1\} & \cellcolor{gray!25} \substack{\oeis{A011782} \\ \oeis{A000079}^{\dagger}}& \Leftarrow & \cellcolor{gray!25} \oeis{A139756} & \Leftarrow & \Leftarrow & \Leftarrow \\
		\hline
		\mathcal T & \{1\} & \cellcolor{gray!25} \oeis{A000079} & \Leftarrow & \cellcolor{gray!25} \substack{\oeis{A001787} \\ \cellcolor{gray!25} \oeis{A139756}}^{\dagger}& \Leftarrow & \Leftarrow & \Leftarrow \\
		\hline
		\mathcal R^* & \{1\} & \cellcolor{gray!25} \oeis{A001477} & \Leftarrow & \cellcolor{gray!25} \oeis{A000217} & \Leftarrow & \Leftarrow & \Leftarrow \\
		\hline
		\mathcal S^* & \{1\} & \cellcolor{gray!25} \substack{\oeis{A212804}\\ \oeis{A000045}^{\dagger}}& \Leftarrow & \cellcolor{gray!25} \oeis{A001629} & \Leftarrow & \Leftarrow & \Leftarrow \\
		\hline
		\mathcal T^* & \{1\} & \cellcolor{gray!25} \oeis{A000045} & \Leftarrow & \cellcolor{gray!25} \oeis{A001629}^{\dagger} & \Leftarrow & \Leftarrow & \Leftarrow \\
		\hline \hline
		\mathcal R & \{2\} & \cellcolor{gray!25} \oeis{A008619} & \Leftarrow & \cellcolor{gray!25} \oeis{A000217}^{\dagger} & \Leftarrow & \cellcolor{gray!25} \oeis{A002378}^{\dagger}& \Leftarrow \\
		\hline
		\mathcal S & \{2\} & \cellcolor{gray!25} \substack{\oeis{A324969} \\ \cellcolor{gray!25} \oeis{A000045}^{\dagger} }& \Leftarrow & \cellcolor{gray!25} \oeis{A001269}^{\dagger} & \Leftarrow & \cellcolor{gray!25} 2\cdot\oeis{A001629}^{\dagger}  & \Leftarrow \\
		\hline
		\mathcal T & \{2\} &  \cellcolor{gray!25} \oeis{A000045}^{\dagger} & \Leftarrow & \cellcolor{gray!25} \oeis{A001269} & \Leftarrow & \cellcolor{gray!25} 2\cdot\oeis{A001629}^{\dagger} & \Leftarrow \\
		\hline
		\mathcal R^* & \{2\} & \cellcolor{gray!25} \oeis{A004526} & \Leftarrow & \cellcolor{gray!25} \oeis{A000217}^{\dagger} & \Leftarrow & \cellcolor{gray!25} \oeis{A002378}^{\dagger} & \Leftarrow \\
		\hline
		\mathcal S^* & \{2\} & \cellcolor{gray!25} \oeis{A000931} & \Leftarrow & \cellcolor{gray!25} \oeis{A228577}^{\dagger} & \Leftarrow & \cellcolor{green!25} 2\cdot\oeis{A228577}^{\dagger,\noa} & \Leftarrow \\
		\hline
		\mathcal T^* & \{2\} & \cellcolor{gray!25} \oeis{A000931}^{\dagger} & \Leftarrow & \cellcolor{gray!25} \oeis{A228577}^{\dagger} & \Leftarrow & \cellcolor{green!25} 2\cdot\oeis{A228577}^{\dagger,\noa} & \Leftarrow \\
		\hline \hline
		\mathcal R & \{1,2\} & \cellcolor{gray!25} \oeis{A000071}^{\dagger} & \cellcolor{gray!25} \oeis{A087811}^{\circ} & \cellcolor{green!25} \oeis{A002940}^{\dagger,\noa} & \cellcolor{gray!25} \oeis{A111384}^{\dagger} & \cellcolor{gray!25} \oeis{A094584}^{\dagger} &  \na  \\
		\hline
		\mathcal S & \{1,2\} & \cellcolor{gray!25} \substack{\oeis{A215928} \\ \cellcolor{gray!25} \oeis{A000129}^{\dagger}}& \cellcolor{gray!25} \oeis{A106805}^{\dagger,\noa} & \na & \na &  \na &  \na \\
		\hline
		\mathcal T & \{1,2\} & \cellcolor{gray!25} \oeis{A000129}^{\dagger} & \cellcolor{gray!25} \oeis{A106805}^{\noa} & \na & \na & \na &  \na  \\
		\hline
		\mathcal R^* & \{1,2\} & \cellcolor{gray!25} \oeis{A001911} & \cellcolor{gray!25} \oeis{A024206}^{\conj} & \cellcolor{green!25} \oeis{A002940}^{\dagger,\noa} & \cellcolor{gray!25} \oeis{A111384}^{\dagger} & \cellcolor{gray!25} \oeis{A094584}^{\dagger} & \na \\
		\hline
		\mathcal S^* & \{1,2\} & \cellcolor{green!25}  \substack{\oeis{A142474}^{\noa} \\ \oeis{A141015}^{\dagger,\noa}} & \cellcolor{green!25} \oeis{A108742}^{\dagger,\noa} & \na & \na & \na & \na  \\
		\hline
		\mathcal T^* & \{1,2\} & \cellcolor{green!25} \oeis{A141015}^{\noa} & \cellcolor{green!25} \oeis{A108742}^{\dagger,\noa}  & \na & \na & \na &  \na  \\
		\hline
	\end{array} \\
	\\
	\textnormal{\normalsize Table A.4 for the case $\mathcal K = \{ 1 \}$}  \\ \\ \hline
	\end{array}
\end{align}

\normalsize

\begin{example} [$\mathcal K =\{1\}$: $2\times$\oeis{A228577}$^\dagger$, \oeis{A002940}$^\dagger$, \oeis{A142474}, \oeis{A108742}$^\dagger$] Suppose that $\mathcal K= \{1\}$ (see Table in equation \eqref{eq:Tables:1}). If $\mathcal N= \{2\}$, then $F(z)=1/(1-z^2)$ and we have the following case:
\begin{itemize}
\item $2\times$\oeis{A228577}$^\dagger$.
For the class $\mathcal S^*$ and $\xi$ the parameter that returns the number of edges in all the petals of the tree with flowers, $F(z,u)=1/(1-u^2z^2)$,
hence $\alpha$ is the positive root of $1-z^2-z^3$, that is,
\begin{align}
\alpha = \frac{1}{6} \left(2^{2/3} \sqrt[3]{3 \sqrt{69}+25}+2^{2/3} \sqrt[3]{25-3 \sqrt{69}}-2\right)
\end{align}
and since $r=2$,
\begin{align}
[z^n]\Omega(z)\sim\frac{(1-\alpha^2)^2n}{2\alpha^3}\alpha^{-n}
\end{align}
and similar analysis in particular yield that $\mathbb E_{\mathcal S_n^*}(\xi)$ is about $82.29\%$ of $n$.
\end{itemize}
If $\mathcal N= \{1,2\}$ we have the following cases:
\begin{itemize}
\item \oeis{A002940}$^\dagger$.
If flowers are rooted-plane, then $F(z)=1/(1-z-z^2)$.
In this case, for the class $\mathcal R$ and $\chi\colon \mathcal R \to \mathbb N$ the parameter that returns the number of petals,
\begin{align}
[z^n]\Omega(z)\sim\frac{(7+3\sqrt{5})n}{10}\left(\frac{2}{\sqrt{5}-1}\right)^n,
\end{align}
and one concludes that $\mathbb E_{\mathcal R_n}(\chi) \sim \frac{1}{2}(1+1/\sqrt{5})n$, about $72.36\%$.
\item \oeis{A142474}.
Again, if flowers are rooted-plane, then $F(z)=1/(1-z-z^2)$. 
In this case, for the class $\mathcal S^*$ we have that $\alpha$ is the real positive
root of $1-z-2z^2-z^3$, that is,
\begin{align}
\label{eq:alphaA142474}
\alpha = \frac{1}{6} \left(2^{2/3} \sqrt[3]{3 \sqrt{93}+29}+2^{2/3} \sqrt[3]{29-3 \sqrt{93}}-4\right)
\end{align}
and hence
\begin{align}
[z^n]f(z) = \frac{(1-\alpha-\alpha^2)^2}{\alpha^2(2+2\alpha-2\alpha^2-\alpha^3)}\alpha^{-n}.
\end{align}
\item \oeis{A108742}$^\dagger$.
If flowers are non-plane, then $F(z)=(1-z)^{-1}(1-z^2)^{-1}$.
In this case, for the class
$\mathcal S^*$, the dominant singularity $\alpha=0.5248885986\ldots$
is the real positive root of $1-z-2z^2+z^4$ and
\begin{align}
[z^n]f(z)\sim\frac{(1-\alpha)^3(1+\alpha)^2}{\alpha^2(2+4\alpha-2\alpha^2-\alpha^3+\alpha^4) }\alpha^{-n}.
\end{align}

\end{itemize}

\end{example}

\end{appendices}


\section*{Declarations}

{\bf Funding.} This work was supported by DGAPA-PAPIIT grants IN107718 and IN110221.

\bigskip

\noindent
{\bf Conflict of interest.} No potential conflict of interest was reported by the author.

\bibliography{GomezTWF-bibliography}

\end{document}